\newfont{\rus}{wncyr10 scaled 1200}
\newfont{\rusb}{wncyb10 scaled 1200}
\newtheorem{theorem}{Theorem}[section]
\newtheorem{question}[theorem]{Question}
\newtheorem{lemma}[theorem]{Lemma}
\newtheorem{corollary}[theorem]{Corollary}
\theoremstyle{definition}
\newtheorem{definition}[theorem]{Definition}
\theoremstyle{remark}
\newtheorem{remark}[theorem]{Remark}
\newcommand{\conv}{{\rm conv}\hskip0.02cm}
\newcommand{\supp}{{\rm supp}\hskip0.02cm}
\newcommand{\ep}{\varepsilon}
\newcommand\remove[1]{}
\begin{document}
\title{\LARGE{\bf  Weak$^*$ closures and derived sets for convex sets in dual Banach spaces}}

\author{Mikhail I.~Ostrovskii}

\date{\today}
\maketitle

\begin{large}

\noindent{\bf Abstract:} The paper is devoted to the convex-set
counterpart of the theory of weak$^*$ derived sets initiated by
Banach and Mazurkiewicz for subspaces. The main result is the
following: For every nonreflexive Banach space $X$ and every
countable successor ordinal $\alpha$, there exists a convex subset
$A$ in $X^*$ such that $\alpha$ is the least ordinal for which the
weak$^*$ derived set of order $\alpha$ coincides with the weak$^*$
closure of $A$. This result extends the previously known results
on weak$^*$ derived sets by Ostrovskii (2011) and Silber (2021).
\medskip

\noindent{\bf Keywords:}  weak$^*$ closure, weak$^*$ derived set,
weak$^*$ sequential closure\medskip

\noindent{\bf MSC 2020 classification:} primary 46B10, secondary
46B20
\medskip

\section{Introduction}

Let $X$ be a Banach space. For a subset $A$ of the dual Banach
space $X^*$, we denote the weak$^*$ closure of $A$ by
${\overline{A}\,}^*$. The {\it weak$^*$ derived set} of $A$ is
defined as
\[A^{(1)} = \bigcup_{n=1}^\infty {\overline{A\cap
nB_{X^*}}\,}^{*},\] where $B_{X^*}$ is the unit ball of $X^*$.
That is, $A^{(1)}$ is the set of all limits of weak$^*$ convergent
bounded nets in $A$. If $X$ is separable, $A^{(1)}$ coincides with
the set of all limits of weak$^*$ convergent sequences from $A$,
called the {\it weak$^*$ sequential closure}. The strong closure
of a set $A$ in a Banach space is denoted by $\overline{A}$. We
set $A^{(0)}:= A$.

It was noticed in the early days of Banach space theory by
Mazurkiewicz \cite{Maz30} that $A^{(1)}$ does not have to coincide
with ${\overline{A}\,}^*$ even for a subspace $A$, and
$(A^{(1)})^{(1)}$ can be different from $A^{(1)}$. In this
connection, it is natural to introduce derived sets for all ordinal
numbers as: (1) If $A^{(\alpha)}$ has been already defined, then
$A^{(\alpha+1)}:=(A^{(\alpha)})^{(1)}$; (2) If $\alpha$ is a limit
ordinal and $A^{(\beta)}$ has been already defined for all
$\beta<\alpha$, then
\begin{equation}\label{E:DefLimDer}
A^{(\alpha)}:=\bigcup_{\beta<\alpha}A^{(\beta)}.\end{equation}

The study of weak$^*$ derived sets was initiated by Banach and
Mazurkiewicz (see \cite{Maz30,Ban32}). Its early developments are
discussed at length in the Appendix to the classical monograph by
Banach \cite{Ban32}. Later, this study was continued by many
authors and found significant applications. Since the well-known
survey \cite{PB79} in the fields of Banach space theory initiated
by \cite{Ban32} does not mention developments stemmed from
Banach's ``Appendix'' \cite[Annexe]{Ban32}, it looks beneficial to
present here a short historical account.

Banach and Mazurkiewicz were primarily interested in the case of a
separable Banach space $X$. Banach asked whether the weak$^*$
sequential closure of a subspace does not have to be weak$^*$
sequentially closed, and Mazurkiewicz \cite{Maz30} gave an
affirmative answer to this question. This result was the reason
for Banach to introduce weak$^*$ sequential closures of all
transfinite orders.

In \cite[Annexe]{Ban32} Banach proved that weak$^*$ sequential
closures of finite orders do not have to be weak$^*$ sequentially
closed. Furthermore, Banach stated that in his paper, which was
going to appear in {\it Studia Math.}, volume {\bf 4}, he proved a
similar result for $X=c_0$ and an arbitrary countable ordinal.
Nevertheless, the cited paper has never been published. A possible
explanation of this situation can be that Banach found a mistake
in his proof when it was late to delete the statement and the
reference in \cite{Ban32}. It is regrettable that the story was
left uncommented in the reprint of \cite{Ban32} in \cite{Ban79}
and the survey \cite{PB79} because the editors of \cite{Ban79}
might have known the actual story.

In the late 20s and early 30s, Banach and his school focused on
the sequential approach to weak$^*$ topology and did not use the
notion of weak$^*$ topology. The subject of what is now called
General Topology already existed, see \cite{AU24}, but was not yet
well known. Using General Topology significant part of the theory
was made more elegant (see an account in \cite{DS58}). However,
the sequential approach developed in \cite[Annexe]{Ban32} has its
advantages and has led to significant applications.  An early
application of weak$^*$ sequential closures to the study of sets
of uniqueness for Fourier series was discovered by
Piatetski-Shapiro \cite{Pia54}, and further developed in
\cite{KL87} and \cite{Lyo88}.

As for further development of theoretical aspects of weak$^*$
sequential closures, it is worth pointing out that Banach's claim
mentioned above (see \cite[Annexe, \S1]{Ban32}) was proved in 1968
by McGehee \cite{McG68}, using results by Piatetski-Shapiro
\cite{Pia54}. At the same time, Sarason \cite{Sar68, Sar68b}
proved similar results for some other spaces.

Davis and Johnson \cite{DJ73} developed an essential tool for
investigation of non-quasi-reflexive Banach spaces (that is,
spaces for which the canonical image of $X$ in $X^{**}$ has
infinite codimension). This tool was used by Godun \cite{God78} to
prove that for any finite ordinal the dual of any
non-quasi-reflexive Banach space contains a subspace whose
weak$^*$ sequential closures of finite orders form a strictly
increasing sequence (this result was rediscovered later by
Moscatelli \cite{Mos87}). Godun \cite{God77} also made an attempt
to prove similar results for infinite countable ordinals, but his
argument in \cite{God77} contains gaps.

The result which completes the investigation for separable Banach
spaces was proved  in \cite{Ost87} for general non-quasi-reflexive
separable Banach spaces. Namely, it was proved that, for every
separable non-quasi-reflexive Banach space $X$ and every countable
ordinal $\alpha$, the space $X^*$ contains a linear subspace
$\Gamma$ for which $\Gamma^{(\alpha)}\ne\Gamma^{(\alpha+1)}=X^*$.
This result completes the investigation for separable Banach
spaces for the following reasons: (1) It is easy to see that if
$X$ a separable quasi-reflexive Banach space and $\Gamma$ is a
subspace of $X^*$, then $\Gamma^{(1)}={\overline{\Gamma}\,}^*$.
(2) It is known (a proof is sketched in \cite[page~213]{Ban32})
that for a separable Banach space $X$ and any convex subset
$A\subset X^*$, there is a countable ordinal $\alpha$ such that
$A^{(\alpha)}={\overline{A}\,}^*$.

This result of \cite{Ost87} is presented in \cite{HMVZ07}.
Regrettably, the historical information on weak$^*$ sequential
closures in \cite{HMVZ07} is inaccurate.

Meanwhile, the theory of weak$^*$ sequential closures found
applications in many different fields:

\begin{itemize}

\item The structure theory of Fr\'echet spaces
(\cite{BDH86,DM87,MM89,Mos90,Ost98});

\item The Borel and Baire classification of linear operators,
including applications to the theory of ill-posed problems
(\cite{Sai76,PP80,Pli97,Raj04});

\item  The operator theory (\cite{Sar72});

\item The theory of universal Markushevich bases
(\cite{Pli86,HMVZ07}).

\end{itemize}

The survey \cite{Ost01} contains a more detailed historical
account on weak$^*$ sequential closures, which was up-to-date in
2000.

It is worth mentioning that the proof of nonexistence of universal
Markushevich bases in \cite{Pli86} uses the existence of subspaces
satisfying $\Gamma^{(\alpha)}\ne\Gamma^{(\alpha+1)}=X^*$ in the
same way as Szlenk \cite{Szl68} uses the existence of reflexive
spaces with an arbitrary large Szlenk index in his proof of
nonexistence of universal reflexive Banach spaces.
\medskip

Recently, sequential closures and derived sets became objects of
interest in some other areas,  such as:

\begin{itemize}

\item Extension problems for holomorphic functions on dual Banach
spaces (\cite{GKM10});

\item Valuations (\cite{Ale17});

\item The mathematical economics (\cite{DO19});

\item The duality operators/spaces (\cite{Sal21+}).
\end{itemize}

Garcia-Kalenda-Maestre \cite{GKM10} initiated the theory of
weak$^*$ derived sets for convex sets. This theory was developed
by Ostrovskii \cite{Ost11}, who proved that, for an arbitrary
non-reflexive Banach space $X$ (not necessarily
non-quasi-reflexive), there a convex set $A\subset X$ for which
$A^{(1)}\ne A^{(2)}$. This result showed that the theory for
convex sets is different from the theory for subspaces - for
convex sets quasi-reflexivity does not imply that
$A^{(1)}={\overline{A}\,}^*$. Silber \cite{Sil21} developed this
result further by proving that, for an arbitrary non-reflexive
space $X$ and an arbitrary $n\in\mathbb{N}$, the space $X^*$
contains a convex subset $A$ for which $n$ is the least ordinal
satisfying $A^{(n)}={\overline{A}\,}^*$ and a convex subset $D$
for which $\omega+1$  is the least ordinal satisfying
$D^{(\alpha)}={\overline{D}\,}^*$ . Our goal is to develop this
theory further by proving the following result.

\begin{theorem}\label{T:Successor} Let $X$ be a non-reflexive Banach space and $\alpha$ be a countable ordinal. Then there
exists a convex subset $A\subset X^*$ such that $\alpha$ is the
least ordinal for which $A^{(\alpha)}\ne
A^{(\alpha+1)}={\overline{A}}^*$.
\end{theorem}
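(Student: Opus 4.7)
My plan is transfinite induction on the countable ordinal $\alpha$, extending the constructions of Silber \cite{Sil21} for finite $\alpha$ and $\alpha=\omega$, and of Ostrovskii \cite{Ost11} for $\alpha=1$. The main structural idea is that in any non-reflexive $X$ there is enough ``room'' in $X^*$ to append weak$^*$-derivation ``layers'' one at a time, and these layers can be composed transfinitely. As a uniform base, I would exploit the standard consequence of non-reflexivity that bounded sequences in $X$ can have weak$^*$-limits in $X^{**}$ lying outside $X$; together with a Davis--Johnson-type factorization this gives a manageable substructure of $X^*$ on which the construction can be carried out uniformly in $X$, regardless of whether $X$ is quasi-reflexive or contains $\ell_1$.

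For the inductive construction: in the successor case $\alpha=\beta+1$, I would start with a convex $B\subset X^*$ realizing the statement for $\beta$ and define $A$ by adjoining one additional ``derivation layer,'' namely by taking carefully scaled convex combinations of translates of $B$ arranged along a bounded sequence in $X^*$ whose weak$^*$-limit supplies the new gap. The design ensures that one weak$^*$-derivation of $A$ recovers a copy of $B$, after which the induction hypothesis closes the count. In the limit case $\alpha=\sup_n\alpha_n$, I would take $A=\conv\bigl(\bigcup_n(c_n A_n+v_n)\bigr)$, where each $A_n$ realizes $\alpha_n$, the scalars $c_n\to 0$, and the translates $v_n$ are chosen so that the components are well separated in the weak$^*$ sense. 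A distinguished weak$^*$-limit $\zeta$ of the $v_n$ plays the role of the witness separating $A^{(\alpha)}$ from $A^{(\alpha+1)}=\overline{A}^*$.

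The verification then splits in the usual way. The inclusion $\overline{A}^*\subset A^{(\alpha+1)}$ (the nontrivial half of the stated equality) should follow routinely by transfinite induction from the layered construction, since each derivation peels off one layer. The opposite direction $A^{(\alpha)}\ne\overline{A}^*$ is the genuinely hard part: one must exhibit an element $\zeta\in\overline{A}^*\setminus A^{(\alpha)}$ and show, by transfinite induction on $\beta\le\alpha$, that $\zeta\notin A^{(\beta)}$. I expect the main obstacle to be the limit case of this latter induction, where bounded nets in $A^{(\beta)}$ may combine contributions from infinitely many layers and one must preclude accidental ``diagonal'' approximations of $\zeta$. The natural tool is a separation argument using an element of $X^{**}$ (acting as a weak$^*$-continuous-on-bounded-sets functional on $X^*$) that vanishes on every stage $A^{(\beta)}$ with $\beta<\alpha$ but detects $\zeta$; producing such a functional compatibly with every level of the construction is, in my view, where the real work lies.
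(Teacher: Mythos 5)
Your overall architecture (countably many scaled, translated copies of lower-order sets accumulating weak$^*$ at a new witness, with a disjoint-union device at limit ordinals) is genuinely parallel to the paper's construction, which encodes the same layering combinatorially via transfinite forests $F_\alpha$ and realizes them as nonnegative finitely supported vectors over a basic sequence. But the proposal has a real gap, and it is exactly the step you defer. The hard part is not only showing $\zeta\notin A^{(\alpha)}$; it is first proving an \emph{upper} bound on every derived set, of the form $A^{(\beta)}\subset\overline{\conv\left(\bigcup_{0\le\gamma\le\beta}X^\gamma\right)}$, i.e., an explicit identification of which new points one weak$^*$ derivation can produce from bounded sequences of convex combinations mixing infinitely many layers. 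Your proposed substitute --- a functional in $X^{**}$ ``acting as a weak$^*$-continuous-on-bounded-sets functional'' that vanishes on every $A^{(\beta)}$, $\beta<\alpha$, but not at $\zeta$ --- cannot exist: a linear functional on $X^*$ that is weak$^*$ continuous on bounded sets has weak$^*$ closed kernel (Banach--Dieudonn\'e/Krein--Smulian) and hence lies in $X$, and a functional from $X$ cannot separate $\zeta\in\overline{A}^*$ from $A$. If instead the functional is a genuine element of $X^{**}\setminus X$, then vanishing on $A^{(\beta)}$ gives no information about $A^{(\beta+1)}$, so the transfinite induction does not close. The only way through is the paper's route: prove the norm-closure upper bound on each $A^{(\beta)}$ first, and then separate the witness from a \emph{norm}-closed convex set by a norm-continuous functional built from $z^{**}$ and finitely many coordinates.

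The quantitative tool that makes that upper bound provable --- and which is absent from your sketch --- is the Pe\l czy\'nski--Singer theorem \cite{Pel62,Sin62}: every non-reflexive $X$ contains a bounded basic sequence $\{z_i\}$ with $\|z_i\|\ge1$ and uniformly bounded partial sums. A weak$^*$ cluster point $z^{**}$ of the partial sums then satisfies $c\|x\|\le z^{**}(x)\le C\|x\|$ on nonnegative combinations of the biorthogonal functionals, so boundedness of a sequence of convex combinations forces $\ell_1$-convergence of the coefficient arrays (Lemmas of the type ``pointwise plus norm convergence in $\ell_1$ implies strong convergence''); this is precisely what excludes the diagonal approximations of $\zeta$ you worry about. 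Note that a Davis--Johnson-type factorization \cite{DJ73} is the wrong uniform base here, since it requires non-quasi-reflexivity, whereas the theorem must cover quasi-reflexive non-reflexive spaces such as James' space. Finally, your limit-case set $\conv\bigl(\bigcup_n(c_nA_n+v_n)\bigr)$ needs an extra root/witness adjoined (as the paper does by passing from $F_\alpha$ to $F_{\alpha+1}$), and one must verify that the witness is not absorbed at stage $\alpha$ --- which again presupposes the upper bound above. As it stands, the proposal is a plausible plan whose decisive steps remain unproved.
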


This theorem is proved in Section \ref{S:PrfThrm}.\medskip

\section{Trees and Forests}\label{S:Trees&Forests}

Our construction of sets is quite different from the one in
\cite{Ost11, Sil21}. For our construction of sets whose existence
is stated in Theorem \ref{T:Successor}, we need families of both
finite and infinite forests - that is, graphs with no cycles, and
trees - that is, connected forests. It has to be pointed out
 that although our terminology and constructions are
related to the ones in \cite[p.~161]{Die90}, \cite[Section
2]{JO98}, \cite{Kec95}, \cite{KL87}, \cite[Section 2]{Ode04}, and
\cite{Tro92}, they are different.

To achieve the goal of this work, we need a family of graphs
labelled by countable ordinals $\alpha$, including all finite
ordinals. The graph structure of these graphs is induced by a
partial order on their vertex sets. To be specific, they are
constructed using the definition below.

\begin{definition}[Graph structure on partially ordered sets]\label{D:GrParOrd} Given  be a nonempty partially ordered set $V$ such that the set $\{y:y\le x\}$ is finite and linearly ordered
for each $x\in V$. The
 graph $F_V$ is defined in the following way. The vertex set of $F_V$ is
taken to be $V$. Two vertices $x,y \in F_V$ are {\it adjacent} if
and only if  they are  comparable and  the set of vertices between
them is empty. For a vertex $x\in F_V,$  its \emph{up-degree} is
defined as the cardinality of the set of vertices $y$ adjacent to
$x$ and satisfying $x<y$. Each such $y$ is called an
\emph{up-neighbor} of $x$, which is written as $y\succ x$.
Likewise, the \emph{down-degree} of a vertex $x\in F_V$ is defined
as the cardinality of the set of vertices $y$ adjacent to $x$ and
satisfying $y<x$. Each such $y$ is called a \emph{down-neighbor}
of $x$, and we write $y\prec x$.
\end{definition}

If the context specifies a set $V$, we omit the lower index of
$F_V$ and denote it by $F$ whenever it does not lead to
misunderstanding.

It is clear that the obtained graph $F$ is a forest and that the
down-degree of a vertex in $F$ can be either $0$ or $1$, while the
up-degree can be any cardinal. Vertices with down-degree $0$ are
called {\it initial}. Vertices with up-degree $0$ are called {\it
terminal}.

For each ordinal $\alpha<\omega_1$ ($\omega_1$ is our notation for
the least uncountable ordinal), we construct a forest $F_\alpha$ of
the type described above. The forest $F_\alpha$ will be called a
\emph{forest of order $\alpha$}. Our construction of $F_\alpha$ is
inductive.

Throughout the paper, all ordinals (except $\omega_1$) are assumed
to be countable, that is, satisfying $<\omega_1$.

The forest $F_0$ of order $0$ contains one vertex, no edges.
Suppose that we have already defined the forest $F_\alpha$ of
order $\alpha$. To get the forest $F_{\alpha+1}$ of order
$\alpha+1$ in the case where $\alpha$ is a successor ordinal, we
introduce $\aleph_0$ (notation for the cardinality of
$\mathbb{N}$) disjoint copies of $F_\alpha$, by this we mean that
any two elements of different copies are incomparable in the
partial order, and each copy inherits the partial order of
$F_\alpha$. After that, we add to the vertex set one more vertex
as a vertex which is $<$ each of the vertices of each copy of
$F_\alpha$ and define $F_{\alpha+1}$ as the forest obtained using
Definition \ref{D:GrParOrd} for the resulting partially ordered
set. Observe that the added vertex is adjacent to the initial
vertex of each copy of $F_\alpha$, and has no other adjacent
vertices.

If $\beta<\omega_1$ is a limit ordinal, we consider a strictly
increasing sequence $\{\beta_n\}_{n=1}^\infty$ of successor
ordinals converging to $\beta$ and define $F_\beta$ as a disjoint
union of $F_{\beta_n}$ over all $n\in \mathbb{N}.$ This means that
we use Definition \ref{D:GrParOrd} for the partial order on the
union of $F_{\beta_n}$'s in which each $F_{\beta_n}$ keeps its
partial order, and vertices of different $F_{\beta_n}$'s  are not
comparable. Therefore, in this case $F_\beta$ is disconnected with
$\aleph_0$ connected components.
\medskip

Finally, to define $F_{\alpha+1}$ in the case where $\alpha$ is a
limit ordinal, we add to the graph $F_\alpha$ one more vertex
which is $<$ each of the vertices of $F_\alpha$ and use Definition
\ref{D:GrParOrd}. The definition of $F_\alpha$ implies that this
new vertex has up-degree $\aleph_0$. \medskip

Observe that up-degrees of vertices in the forests $\{F_\alpha\}$
can be only $0$ or $\aleph_0$. This statement about up-degrees
follows by observing that each new - that is, not a copy of an
introduced before - vertex which is used in the construction of
$F_\alpha$ for $\alpha\ge 1$ has up-degree $\aleph_0$. Only
vertices obtained from the vertex forming $F_0$ by repeated
copying have up-degree $0$.\medskip

Note that $F_n$ for a finite ordinal $n$ is what is called the
(infinitely) \emph{countably branching tree of depth} $n$.
\medskip

For a forest $F$ of the described above type, we define the
\emph{derived forest} $F^{1}$ as a subgraph of $F$ from which we
delete all infinite sets of terminal vertices having the same
down-neighbor. If we have already defined the derived forest
$F^{\beta}$ of order $\beta$, we define the derived forest of
order $\beta+1$ as $(F^{\beta})^{1}$. If $\beta$ is a limit
ordinal and we have already defined derived forests for smaller
ordinals, we set
$F^{\beta}=\cap_{\gamma<\beta}F^{\gamma}$.\medskip

Note that the derived forest $(F_\alpha)^{\gamma}$ (we write
$F_\alpha^{\gamma}$ since it does not create any confusion) of
order $\gamma$ is not necessarily of the form $F_\beta$ for some
ordinal $\beta$. For example, $F_{\omega_0}^{\omega_0}$
($\omega_0$ is the least infinite ordinal) is a collection of
isolated vertices of cardinality $\aleph_0$.
\medskip

To obtain a  description of derived forests, we start with the
following construction. If $\{K_i\}_{i=1}^k$ are forests
constructed from partially ordered sets as in Definition
\ref{D:GrParOrd}, by $\biguplus_{i=1}^k K_i$ we denote the
disjoint union of $\{K_i\}_{i=1}^k$, which can also be described
as a graph constructed using Definition \ref{D:GrParOrd} for the
partial order on the union of $\{K_i\}_{i=1}^k$ which on sets
$K_i$ coincides with their original partial order, and any two
elements of different $K_i$ are incomparable. This notation is
also used in the case where $k=\infty$. We also shall use the
notation

\[\left(\biguplus_{i=1}^k K_i\right)\biguplus\left(\biguplus_{i=k+1}^\infty
K_i\right),\] which is self-explanatory at this point.

The following two lemmas contain results on derived forests needed
for our construction.

\begin{lemma}\label{L:Der} {\bf (1)} Let $n$ and $k$, $1\le k\le n$, be finite
ordinals, then $(F_n)^k=F_{n-k}$.
\medskip

{\bf (2)} Let $\alpha$ be any ordinal. Then
$(F_{\alpha+1})^\alpha=F_1$ and $(F_{\alpha+1})^{\alpha+1}=F_0$.
\medskip

{\bf (3)} Let $\alpha$ be a limit ordinal and
$\{\beta_n\}_{n=1}^\infty$ be an increasing sequence of successor
ordinals converging to $\alpha$, used to define $F_\alpha$. Then,
for $\beta<\alpha$,
\begin{equation}\label{E:AlphaBeta}(F_\alpha)^\beta=\left(\biguplus_{n,~\beta\ge\beta_n}
F_0\right)\biguplus\left(\biguplus_{n,~\beta<\beta_n}
(F_{\beta_n})^\beta\right),
\end{equation}
and
\[(F_\alpha)^\alpha=\biguplus_{i=1}^\infty F_0.\]

\end{lemma}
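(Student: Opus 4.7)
The plan is to establish (1) by a short finite induction on $k$ and to prove (2) and (3) together by transfinite induction on $\alpha$. Two observations I will use throughout are: (a) an isolated vertex is never part of an infinite group of terminal siblings, so $(F_0)^\beta = F_0$ for every ordinal $\beta$; and (b) the derived operation distributes over disjoint unions,
\[\left(\biguplus_i K_i\right)^\beta = \biguplus_i (K_i)^\beta,\]
because any two vertices in different summands have no common down-neighbor, so every infinite terminal sibling group is confined to a single component.

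For (1), I note that in $F_n$ every terminal vertex lies at depth $n$, while each vertex at depth $n-1$ has $\aleph_0$ terminal children. One derived step therefore removes exactly the depth-$n$ vertices, yielding $F_{n-1}$, and iterating $k$ times gives $(F_n)^k = F_{n-k}$.

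For (3), I decompose $F_\alpha = \biguplus_n F_{\beta_n}$ and apply (b) to reduce the problem to the behaviour of $(F_{\beta_n})^\beta$. Each $\beta_n$ is a successor ordinal strictly below $\alpha$, so the inductive hypothesis for (2) gives $(F_{\beta_n})^{\beta_n} = F_0$, which by (a) stays $F_0$ for all $\beta \ge \beta_n$; for $\beta < \beta_n$ no simplification has occurred yet. Assembling via (b) yields the stated formula. Intersecting over $\beta < \alpha$ collapses every component (since each $\beta_n < \alpha$), so $(F_\alpha)^\alpha = \biguplus_n F_0$.

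For (2), I carry an auxiliary invariant alongside the induction: the initial vertex of $F_{\gamma+1}$ stays non-terminal in $(F_{\gamma+1})^\beta$ for every $\beta \le \gamma$. At a successor step $\alpha = \gamma+1$, $F_{\alpha+1}$ is a new root $r$ with $\aleph_0$ attached copies of $F_\alpha$; the invariant lets (b) act componentwise through $\gamma$ derived steps, giving $(F_{\alpha+1})^\gamma = F_2$, and two further derived steps together with (1) deliver $(F_{\alpha+1})^\alpha = F_1$ and $(F_{\alpha+1})^{\alpha+1} = F_0$. I expect the main obstacle to be the limit step: there $F_{\alpha+1}$ attaches a new root $r$ to the disconnected forest $F_\alpha = \biguplus_n F_{\beta_n}$, and the crucial point is that $r$ loses none of its children during the entire transfinite derivation. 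For any $\beta < \alpha$ only finitely many $\beta_n$ satisfy $\beta_n \le \beta$, so at most finitely many children of $r$ are terminal at step $\beta$ and no infinite terminal sibling group beneath $r$ ever forms. Combining this with (3) shows that $(F_{\alpha+1})^\alpha$ consists of $r$ together with $\aleph_0$ terminal children, i.e.\ $F_1$, and then $(F_{\alpha+1})^{\alpha+1} = F_0$ by (1).
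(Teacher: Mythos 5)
Your proof is correct and follows essentially the same route as the paper's: part (1) directly from the finite structure, part (3) via the componentwise action of the derived operation on the disjoint union $\biguplus_n F_{\beta_n}$ together with $(F_0)^\gamma=F_0$, and part (2) by transfinite induction whose limit step rests on (3). The paper compresses the successor and limit steps of (2) into one-line assertions; your auxiliary invariant (the subtree roots stay non-terminal through stage $\gamma$) and your observation that only finitely many children of the new root are terminal at any stage $\beta<\alpha$ are precisely the details needed to justify those assertions.
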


\begin{proof} {\bf (1)} The case of finite ordinals follows easily
from the definition of $F_n$.

It is easy also to see from the definition of $F_\alpha$ for a
successor ordinal that the conditions $(F_{\alpha+1})^\alpha=F_1$
and $(F_{\alpha+1})^{\alpha+1}=F_0$ imply
$(F_{\alpha+2})^{\alpha+1}=F_1$ and
$(F_{\alpha+2})^{\alpha+2}=F_0$.

\medskip

Before proving {\bf (2)} for limit ordinals (provided we proved it
for all smaller ordinals), we need to prove the equality
\eqref{E:AlphaBeta}.

To get equality \eqref{E:AlphaBeta} we use the obvious
$F_0^\gamma=F_0$ for every $\gamma$, its generalization
\[\left(\biguplus_{n,~\beta\ge\beta_n} F_0\right)^\gamma
=\left(\biguplus_{n,~\beta\ge\beta_n} F_0\right),\] and the
definition of $F_\alpha$. The equality \eqref{E:AlphaBeta} implies
that $F_{\alpha+1}^\alpha=F_1$ and $F_{\alpha+1}^{\alpha+1}=F_0$
for a limit ordinal $\alpha$, provided the statement {\bf (2)} is
known for all ordinals $\beta<\alpha$.

This implies all formulas stated in Lemma \ref{L:Der}.
\end{proof}

\begin{lemma}\label{L:UpTerm} If a vertex in $(F_\alpha)^\beta$ has
infinitely many terminal up-neighbors, then all of its
up-neighbors are terminal vertices.
\end{lemma}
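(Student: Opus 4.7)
The plan is to reduce the lemma to the study of the root of $(F_\epsilon)^\beta$ for each $\epsilon$ that is either zero or a successor ordinal, and then to verify the claim by a direct case analysis on how $F_\epsilon$ is constructed. The first step is the observation that for every vertex $v \in F_\alpha$, the subtree $S_v$ rooted at $v$ (that is, $v$ together with all its descendants, with the induced partial order) is isomorphic to $F_\epsilon$ for some ordinal $\epsilon$ which is either zero or a successor. This follows by induction on the inductive construction of $F_\alpha$, since new roots are introduced only at successor stages, while forests $F_\beta$ with $\beta$ a limit ordinal are disconnected and hence never arise as the subtree rooted at a single vertex.

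The second step is the reduction. Because the up-neighbors of any vertex lie in its own subtree, the derivation rule is local and commutes with restriction to $S_v$. Thus $(F_\alpha)^\beta \cap S_v = (F_\epsilon)^\beta$ under the natural isomorphism $S_v \cong F_\epsilon$ at every ordinal stage $\beta$ (at limit stages this follows from commutativity of intersection with restriction to a fixed set). The vertex $v$ corresponds to the root of $F_\epsilon$, which is never deleted since it has no down-neighbor. Under this identification the lemma at $v$ in $(F_\alpha)^\beta$ reduces to the same statement at the root of $(F_\epsilon)^\beta$, so it suffices to prove the latter for every successor (or zero) $\epsilon$ and every $\beta$.

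The third step is the case analysis. The case $\epsilon = 0$ is vacuous. For $\epsilon = \delta + 1$ with $\delta$ a successor ordinal or zero, $F_\epsilon$ is the root $r$ together with $\aleph_0$ isomorphic copies of $F_\delta$ attached; the up-neighbors of $r$ in $(F_\epsilon)^\beta$ are roots of $\aleph_0$ isomorphic copies of $(F_\delta)^\beta$. Since the derivation acts identically on isomorphic subtrees, at each stage $\beta$ these up-neighbors are either all terminal or all non-terminal, so the conclusion of the lemma follows automatically whenever its hypothesis holds at $r$. For $\epsilon = \delta + 1$ with $\delta$ a limit ordinal, $F_\epsilon$ consists of a root $r$ together with the disjoint union $\biguplus_n F_{\beta_n^{(\delta)}}$, where $\{\beta_n^{(\delta)}\}$ is the strictly increasing sequence of successor ordinals used to define $F_\delta$, and the up-neighbors $u_n$ of $r$ are the initial vertices of the components. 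By Lemma~\ref{L:Der}(2) the subtree $(F_{\beta_n^{(\delta)}})^\beta$ at $u_n$ equals $F_0$ precisely when $\beta \ge \beta_n^{(\delta)}$. Consequently, for $\beta < \delta$ only finitely many $u_n$ are terminal (since the $\beta_n^{(\delta)}$ have supremum $\delta$), while for $\beta = \delta$ every $u_n$ is terminal: in the first case the hypothesis of the lemma fails at $r$, in the second both hypothesis and conclusion hold, and for $\beta > \delta$ we have $(F_\epsilon)^\beta = F_0$ and there is nothing to check.

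The main obstacle will be setting up the reduction cleanly, in particular verifying that the restriction identity $(F_\alpha)^\beta \cap S_v = (F_\epsilon)^\beta$ is preserved through limit stages of the derivation and confirming the structural claim that the subtree rooted at any vertex is always of order zero or a successor. Granted these, Lemma~\ref{L:Der}(2) directly handles the successor-of-limit case, and the isomorphism argument handles the successor-of-successor case.
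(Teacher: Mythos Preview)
Your proof is correct and follows essentially the same approach as the paper: both arguments reduce the statement to the initial vertex of a tree $F_\epsilon$ with $\epsilon$ a successor, and then carry out the identical two-case analysis (successor-of-successor via the ``all copies isomorphic'' observation, successor-of-limit via Lemma~\ref{L:Der}). The only organizational difference is that you make the reduction explicit through the subtree identification $S_v\cong F_\epsilon$ and the locality of the derivation, whereas the paper achieves the same reduction by transfinite induction on $\alpha$ (using the induction hypothesis to handle all non-initial vertices); one minor caveat worth noting is that your equality $(F_\alpha)^\beta\cap S_v=(F_\epsilon)^\beta$ can fail at the single vertex $v$ once $v$ is deleted in $(F_\alpha)^\beta$, but this is harmless since the lemma is then vacuous at $v$.
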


\begin{proof} We prove this using induction on $\alpha$. For
finite $\alpha$, the statement immediately follows from the
description of $F_\alpha^\beta$ in Lemma \ref{L:Der} {\bf(1)}.

If the statement is true for all ordinals which are strictly less
that a limit ordinal $\alpha$, then it is true for $F_\alpha$. In
fact, since $F_\alpha$ consists of components which are $F_\beta$
for $\beta<\alpha$, and derived forests of disconnected forests
are taken componentwise, the conclusion follows.

Now we assume that the statement is true for $F_\alpha$, and
derive it for $F_{\alpha+1}$. There are two cases:

\begin{enumerate}[{\bf (a)}]

\item $\alpha$ is a limit ordinal,

\item $\alpha$ is a successor ordinal,
\end{enumerate}

In both cases the induction hypothesis implies that the condition
holds for all vertices of all derived forest except, possibly the
initial vertex.\medskip

\noindent{\bf Case (a):} For the initial vertex it is also
satisfied because Lemma \ref{L:Der} {\bf (3)} implies that for
$\beta<\alpha$, the initial vertex has finitely many terminal
up-neighbors in the derived forest $F_{\alpha+1}^\beta$. On the
other hand, all of the up-neighbors of the initial vertex are
terminal in the derived forest $(F_{\alpha+1})^\alpha$.
\medskip

\noindent{\bf Case (b):} In this case, each of the derived forests
consists of $\aleph_0$ copies of the same tree attached to the
initial vertex. The conclusion follows.
\end{proof}

\section{Reduction to separable case}\label{S:RedSeparable}

Our main goal is to prove Theorem \ref{T:Successor}. The main part
of our proof of Theorem \ref{T:Successor} is its proof for a
separable Banach space with a basis satisfying special condition.
In this section we show that this special case implies the general
case of Theorem \ref{T:Successor}.\medskip

We need the following notation. Let $Z$ be a closed subspace in a
Banach space $X$ and $E:Z\to X$ be the natural isometric
embedding. Then $E^*:X^*\to Z^*$ is a quotient mapping which maps
each functional in $X^*$ onto its restriction to $Z$. Let $A$ be a
subset of $Z^*$. It is clear that $D=(E^*)^{-1}(A)$ is the set of
all extensions of all functionals in $A$ to the space $X$.

\begin{lemma}[{cf. \cite{Ost87,Ost11}}]\label{L:DerExt}
For any ordinal $\alpha$ the equality
\begin{equation}\label{E:DerExt}
D^{(\alpha)}=(E^*)^{-1}(A^{(\alpha)})
\end{equation}
holds, where the derived set $D^{(\alpha)}$ is taken in $X^*$ and
the derived set $A^{(\alpha)}$ -- in $Z^*$.
\end{lemma}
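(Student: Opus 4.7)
The plan is to prove the equality $D^{(\alpha)} = (E^*)^{-1}(A^{(\alpha)})$ by transfinite induction on $\alpha$, using two properties of the restriction map $E^*$: it is weak$^*$-to-weak$^*$ continuous, and by the Hahn--Banach theorem it maps $nB_{X^*}$ onto $nB_{Z^*}$ for every $n$. The base case $\alpha=0$ is just the definition of $D$, and the limit case is immediate from \eqref{E:DefLimDer}, since preimages commute with unions. All the work is in the successor step.

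Before starting the induction, I would record a simple auxiliary fact: $D^{(\alpha)} + Z^\perp = D^{(\alpha)}$ for every $\alpha$. This is clear for $\alpha=0$ because $D=(E^*)^{-1}(A)$ is a union of cosets of $Z^\perp=\ker E^*$; it passes to successors because translating a bounded weak$^*$ convergent net by a fixed element of $Z^\perp$ keeps it bounded and weak$^*$ convergent; and limits are trivial. Thus each $D^{(\alpha)}$ is of the form $(E^*)^{-1}(E^*(D^{(\alpha)}))$, which reduces the induction step to identifying $E^*(D^{(\alpha)})$ with $A^{(\alpha)}$.

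For the successor step, assume $D^{(\alpha)}=(E^*)^{-1}(A^{(\alpha)})$. The inclusion $D^{(\alpha+1)}\subseteq (E^*)^{-1}(A^{(\alpha+1)})$ is the easy one: if $x^*$ is the weak$^*$ limit of a net $x_\gamma^*\in D^{(\alpha)}\cap nB_{X^*}$, then $E^*(x_\gamma^*)\in A^{(\alpha)}\cap nB_{Z^*}$ by the induction hypothesis and boundedness of $E^*$, and weak$^*$ continuity of $E^*$ gives $E^*(x_\gamma^*)\to E^*(x^*)$, so $E^*(x^*)\in A^{(\alpha+1)}$. The reverse inclusion is the main obstacle, because a weak$^*$ convergent net in $Z^*$ need not lift to a weak$^*$ convergent net in $X^*$. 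Given $x^*$ with $E^*(x^*)\in\overline{A^{(\alpha)}\cap nB_{Z^*}}^{\,*}$, choose a net $z_\gamma^*\in A^{(\alpha)}\cap nB_{Z^*}$ with $z_\gamma^*\to E^*(x^*)$ weak$^*$, and use Hahn--Banach to pick norm-preserving extensions $\tilde z_\gamma^*\in nB_{X^*}$; by the induction hypothesis these lie in $D^{(\alpha)}$. The net $\{\tilde z_\gamma^*\}$ lives in the weak$^*$-compact ball $nB_{X^*}$, so it has a weak$^*$ cluster point $w^*$. Then $w^*\in \overline{D^{(\alpha)}\cap nB_{X^*}}^{\,*}\subseteq D^{(\alpha+1)}$, and by continuity of $E^*$, $E^*(w^*)=E^*(x^*)$, so $x^*-w^*\in Z^\perp$. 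Invoking the translation invariance $D^{(\alpha+1)}+Z^\perp=D^{(\alpha+1)}$ established above yields $x^*\in D^{(\alpha+1)}$, finishing the step.

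The only delicate point is really the cluster-point argument combined with the $Z^\perp$-invariance: one cannot hope that the Hahn--Banach extensions themselves converge to $x^*$ (different lifts of the same functional in $Z^*$ differ by an element of $Z^\perp$), but one only needs a point in $D^{(\alpha+1)}$ in the same $Z^\perp$-coset as $x^*$, which the cluster point supplies. Everything else is routine bookkeeping with weak$^*$ continuity and the definition of derived sets.
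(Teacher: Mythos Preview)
Your proof is correct and follows essentially the same approach as the paper: both use Hahn--Banach extensions to lift the net in $Z^*$ to a bounded net in $X^*$, invoke weak$^*$ compactness of the ball to obtain a cluster point, and then correct by an element of $Z^\perp=\ker E^*$. The only cosmetic difference is organizational: the paper translates the lifted net by $g-h\in Z^\perp$ \emph{before} passing to the limit (so that a subnet converges directly to the desired preimage), whereas you first take the cluster point $w^*$ and then invoke the separately recorded invariance $D^{(\alpha+1)}+Z^\perp=D^{(\alpha+1)}$ to move from $w^*$ to $x^*$; these are two phrasings of the same translation trick.
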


\begin{proof} The inclusion $D^{(\alpha)}\subset(E^*)^{-1}(A^{(\alpha)})$ follows from
the weak$^*$ continuity of the operator $E^*$ using transfinite
induction.
\medskip

To  prove  the  inverse inclusion by transfinite induction, it
suffices to show that for every bounded net $\{f_\nu\}\subset
Z^{*}$ with $w^*-\lim_\nu f_\nu=f$ and every $g\in
(E^{*})^{-1}(\{f\})$ there exist $g_{\nu}\in
(E^{*})^{-1}(\{f_\nu\})$ such that some subnet of $\{g_\nu\}$ is
bounded and weak$^*$ convergent to $g$. Let $h_{\nu}$ be such that
$h_{\nu}\in (E^{*})^{-1}(\{f_{\nu}\})$ and $||h_{\nu}||=
||f_{\nu}||$ (Hahn-Banach extensions). Then $\{h_{\nu}\}_\nu$ is a
bounded net in $X^{*}$. Hence it has a weak$^*$ convergent subnet,
let $h$ be its limit. Then $g-h\in(E^{*})^{-1}(\{0\})$, therefore
$g_\nu=h_\nu+g-h$ is a desired net.
\end{proof}

\begin{proof}[Reduction of Theorem \ref{T:Successor} to its special case] We are going to
use the following result proved in \cite{Pel62,Sin62}: If a Banach
space $X$ is non-reflexive, then it contains a bounded basic
sequence $\{z_i\}_{i=1}^\infty$ such that $\|z_i\|\ge 1$ for every
$i\in \mathbb{N}$, but
\begin{equation}\label{E:BddPartSum} \sup_{1\le k<\infty}\left\|\sum_{i=1}^kz_i\right\|=C<\infty.\end{equation}
Let $Z$ be the closed linear span of the sequence
$\{z_i\}_{i=1}^\infty$ and $\{z_i^*\}_{i=1}^\infty\subset Z^*$ be
the biorthogonal functionals of $\{z_i\}_{i=1}^\infty$. Let
$z^{**}$ be a weak$^*$-cluster point of the sequence
$\left\{\sum_{i=1}^kz_i\right\}_{k=1}^\infty$ in $Z^{**}$.

We will need the following observations about these vectors:

\begin{enumerate}[{\bf (a)}]

\item\label{I:Le} $|z^{**}(x)|\le C\|x\|$ for every $x\in Z^*$.
This is an immediate consequence of $\|z^{**}\|\le C$ which
follows from \eqref{E:BddPartSum}.

\item\label{I:Ge} If $x$ is a linear combination of
$\{z_i^*\}_{i=1}^\infty$ with nonnegative coefficients, then
$z^{**}(x)\ge c\|x\|$ for some $c>0$.

In fact, let $x=\sum_{i=1}^ka_iz_i^*$. Then
$z^{**}(x)=\sum_{i=1}^k a_i$. On the other hand,
\[\|x\|=\left\|\sum_{i=1}^ka_iz_i^*\right\|\le\sup_i\|z_i^*\|\,\sum_{i=1}^k
a_i.\]

Since $\{z_i\}$ is a basic sequence satisfying $\|z_i\|\ge 1$,
then $\sup_i\|z_i^*\|$ is finite, and the conclusion follows.

\end{enumerate}

Note that analysis of the proof of \cite{Pel62, Sin62} leads to
reasonably small absolute bounds for $\sup_i\|z_i^*\|$ and $C$
from above, but we do not need such bounds.
\medskip

Let us show that Lemma \ref{L:DerExt}, implies that to prove
Theorem \ref{T:Successor} it suffices to find a convex subset
$A\subset Z^*$ such that ${\overline{A}}^*=A^{(\alpha+1)}\neq
A^{(\alpha)}$. In fact, if we construct such $A$, we let
$D=(E^*)^{-1}(A)$. We have, by Lemma \ref{L:DerExt},
$D^{(\alpha)}=(E^*)^{-1}(A^{(\alpha)})$ and
$D^{(\alpha+1)}=(E^*)^{-1}(A^{(\alpha+1)})$. Since each functional
has a continuous extension, $A^{(\alpha+1)}\neq A^{(\alpha)}$
implies $D^{(\alpha+1)}\neq D^{(\alpha)}$.
\medskip

To show that ${\overline{A}}^*=A^{(\alpha+1)}$ implies
${\overline{D}}^*=D^{(\alpha+1)}$ we observe that
${\overline{A}}^*=A^{(\alpha+1)}$ implies that
$A^{(\alpha+1)}=A^{(\alpha+2)}$. By Lemma \ref{L:DerExt} the last
equality implies $D^{(\alpha+1)}=D^{(\alpha+2)}$. By the
Krein-Smulian theorem \cite[p.~429]{DS58}), the condition
$D^{(\alpha+1)}=D^{(\alpha+2)}$ implies
${\overline{D}}^*=D^{(\alpha+1)}$.

For this reason from now on our goal is to prove Theorem
\ref{T:Successor} for $X=Z$.
\medskip

\section{Construction of suitable
convex sets}\label{S:Sets}

We introduce an injective map of $F_\alpha$ into $\mathbb{N}$ and
identify each vertex of $F_\alpha$ with its image in $\mathbb{N}$.
We may and shall assume that if $x<y$ in $F_\alpha$, then the
images $\bar x$ and $\bar y$ of $x,y$ in $\mathbb{N}$ satisfy
$\bar x<\bar y$. In fact, to establish such identification we
reserve for each component of $F_\alpha$ an infinite subsequence
in $\mathbb{N}$ (we reserve the whole $\mathbb{N}$ if $F_\alpha$
is connected). Then we assign to the initial vertex of each of the
components of $F_\alpha$ the least number of the corresponding
subsequence and delete the initial vertices from $F_\alpha$.
Unless an initial vertex in a component $K$ of $F_\alpha$ was a
terminal vertex, deletion of it will split $K$ into infinitely
many (incomparable with respect to the partial order) components.
We reserve for each of these components a subsequence of the
sequence reserved for $K$, and continue in an obvious way.
\medskip

For each terminal vertex $v\equiv n_k\in \mathbb{N}$ (the symbol
$\equiv$ means that we identify the vertex $v$ and number $n_k$)
of $F_\alpha$ we consider the path joining $v$ with the initial
vertex of the component of $F_\alpha$ containing $v$. Let the path
be $n_k,\dots,n_1$. In the path we list vertices only and observe
that (by the result of the previous paragraph) $n_k,\dots, n_1$ is
a decreasing sequence in $\mathbb{N}$.

Introduce for a terminal vertex $v$ of $F_\alpha$, corresponding
to $\{n_k,\dots,n_1\}$, a vector in $Z^*$ given by
\begin{equation}\label{E:VericesA} z^*(v)=\sum_{i=1}^kn_{i-1}z^*_{n_i},
\end{equation} where we set $n_0=n_1$.

Let \begin{equation}\label{E:X} X=X_\alpha=\{z^*(v):~v\hbox{ is a
terminal vertex in }F_\alpha\},\end{equation} and let
$A=A_\alpha=\conv (X)$.

Our next goal is to analyze  the structure of the weak$^*$ derived
sets of $A$.

In this connection, for each countable ordinal $\beta$, we define
a \emph{shortening} $X^\beta$ of the set $X$  as
\[X^\beta=\left\{\sum_{n_i\in F_\alpha^\beta}n_{i-1}z^*_{n_i}:\quad
\sum_{i=1}^kn_{i-1}z^*_{n_i}\in X\right\},\] where $n_i\in
F_\alpha^\beta$ means that the vertex of $F_\alpha$ corresponding
to $n_i$ belongs to the (defined in Section \ref{S:Trees&Forests})
derived forest $(F_\alpha)^\beta$ of order $\beta$. We let
$X^0=X$.

\begin{remark}\label{R:v(y)} Observe that each vector $y$ in any of $X^\beta$ including
$X^0$ is supported on the vertex set of a finite path in
$F^\alpha$ whose vertex set is linearly ordered (recall that the
vertex set of $F^\alpha$ is partially ordered). We denote the
largest vertex in this path by $v(y)$.\end{remark}

\begin{corollary}[Of Lemma \ref{L:Der}]\label{C:NonTrDiff} The
difference
$X^{\beta+1}\backslash\left(\bigcup_{0\le\gamma\le\beta}
X^\gamma\right)$ is nonempty for every $\beta<\alpha$.
\end{corollary}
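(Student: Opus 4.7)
Plan:

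My first observation is that the coefficient of $z^*_n$ in any shortening $y\in\bigcup_\gamma X^\gamma$ is determined by $n$ alone: it equals $n$ when $n$ is the initial vertex of its component of $F_\alpha$, and equals the unique down-neighbor of $n$ in $F_\alpha$ otherwise (since in a forest each non-initial vertex has a unique down-neighbor, which is the predecessor of $n$ in any path through $n$). Combined with the linear independence of the biorthogonal system $\{z^*_n\}$, this means that two elements of $\bigcup_\gamma X^\gamma$ coincide if and only if their supports coincide. It thus suffices to exhibit a finite chain $S\subseteq F_\alpha^{\beta+1}$ of the form $\mathrm{path}(v^*)\cap F_\alpha^{\beta+1}$ for some terminal $v^*$ of $F_\alpha$, that differs from $\mathrm{path}(v)\cap F_\alpha^{\gamma}$ for every terminal $v$ and every $\gamma\le\beta$.

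To construct $v^*$, I would use that $\beta+1\le\alpha$ together with Lemma~\ref{L:Der} to get $F_\alpha^{\beta}\supsetneq F_\alpha^{\beta+1}$, and pick any $u\in F_\alpha^{\beta}\setminus F_\alpha^{\beta+1}$. By the definition of the derivation, $u$ is terminal in $F_\alpha^{\beta}$ and lies in an infinite family of terminal up-neighbors (in $F_\alpha^{\beta}$) of a common down-neighbor $w\in F_\alpha^{\beta}$. Lemma~\ref{L:UpTerm} forces every up-neighbor of $w$ in $F_\alpha^{\beta}$ to be terminal there, so all such up-neighbors are deleted at step $\beta+1$; in particular $w\in F_\alpha^{\beta+1}$ and $w$ is terminal there. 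I then take $v^*$ to be any terminal vertex of $F_\alpha$ above $u$, which exists since every vertex of $F_\alpha$ has a terminal descendant by the recursive construction. A short analysis of the deletions shows that $S:=\mathrm{path}(v^*)\cap F_\alpha^{\beta+1}$ has maximum $w$: vertices strictly between $w$ and $u$ on $\mathrm{path}(v^*)$ are not in $F_\alpha^{\beta}$ (otherwise $u$ would fail to be an up-neighbor of $w$ in $F_\alpha^{\beta}$), while vertices above $u$ are not in $F_\alpha^{\beta}$ because $u$ is terminal there.

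I then argue by contradiction that $y_\gamma(v)\ne y_{\beta+1}(v^*)$ for every terminal $v$ of $F_\alpha$ and every $\gamma\le\beta$. Equality would give $\mathrm{path}(v)\cap F_\alpha^{\gamma}=S$, so $w\in\mathrm{path}(v)$ and no vertex of $\mathrm{path}(v)$ above $w$ belongs to $F_\alpha^{\gamma}$. For $\gamma=0$ this forces $v=w$, impossible because $w$ is not terminal in $F_\alpha$ (it has up-neighbors in $F_\alpha^{\beta}$). For $1\le\gamma\le\beta$ the crux is the following structural fact: every up-neighbor of $w$ in $F_\alpha$ already lies in $F_\alpha^{\beta}$. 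Granting this, the first vertex above $w$ on $\mathrm{path}(v)$ is such an up-neighbor, hence lies in $F_\alpha^{\beta}\subseteq F_\alpha^{\gamma}$, contradicting the constraint.

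The main obstacle is this structural claim about $w$. I would prove it by induction on $\alpha$ following the recursive construction of $F_\alpha$. The underlying principle is that at each stage of the construction the up-neighbors of a given vertex are introduced together and share a common derivation history, so they all become terminal and are deleted simultaneously in the derived forests; applied to our $w$ — whose up-neighbors in $F_\alpha^{\beta}$ are all deleted at step $\beta+1$ — this yields that all up-neighbors of $w$ in $F_\alpha$ are still present in $F_\alpha^{\beta}$. Lemma~\ref{L:Der} supplies the quantitative bookkeeping of the derived forests, and Lemma~\ref{L:UpTerm} the terminal structure at each stage; together they make the induction clean.
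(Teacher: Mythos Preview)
Your argument is correct and in fact more complete than the paper's one-sentence justification, which merely notes that Lemma~\ref{L:Der} forces $X^{\beta+1}$ to contain genuinely shortened vectors not already in $X^\beta$, leaving implicit why such a vector also avoids the earlier $X^\gamma$. Your reduction to supports (via the observation that the coefficient at $z^*_n$ is determined by $n$) and your choice of $w$ as the down-neighbor of a vertex deleted at step $\beta+1$ is exactly the right way to make that implicit step explicit.

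The one place where you are working harder than necessary is the ``main obstacle.'' No separate induction on $\alpha$ is needed: the structural claim is an immediate consequence of Lemma~\ref{L:UpTerm}. Suppose some up-neighbor $u'$ of $w$ in $F_\alpha$ is missing from $F_\alpha^{\beta}$, and let $\delta+1$ be the first stage with $u'\notin F_\alpha^{\delta+1}$ (this first stage is necessarily a successor). Then $u'$ is terminal in $F_\alpha^{\delta}$ and belongs to an infinite family of terminal up-neighbors of $w$ there; by Lemma~\ref{L:UpTerm} \emph{all} up-neighbors of $w$ in $F_\alpha^{\delta}$ are terminal, so all are removed at step $\delta+1$, and $w$ is already terminal in $F_\alpha^{\delta+1}\supseteq F_\alpha^{\beta}$. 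This contradicts your hypothesis that $w$ still has up-neighbors in $F_\alpha^{\beta}$. With this shortcut in place, your proof is a clean fleshing-out of the paper's sketch rather than a different route.
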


In fact, Lemma \ref{L:Der} implies that $X^{\beta+1}$ contains
some ``shortened'' vectors which are not in $X^\beta$.

\begin{corollary}[Of Lemma \ref{L:UpTerm}]\label{C:NoSubvec} It is
impossible for the support of a vector $y\in X^{\beta+1}$ to
contain the support of any of the vectors
$z\in\left(\bigcup_{0\le\gamma\le\beta} X^\gamma\right)$
\end{corollary}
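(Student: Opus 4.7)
The plan is to argue by contradiction: I will assume $\supp(z)$ is properly contained in $\supp(y)$ and produce a contradiction via Lemma \ref{L:UpTerm}. The first step is a preparatory observation, proved by transfinite induction on $\beta$: every derived forest $F_\alpha^\beta$ is downward-closed in the partial order of $F_\alpha$, i.e., if $w \in F_\alpha^\beta$ and $u < w$ in $F_\alpha$, then $u \in F_\alpha^\beta$. Indeed, only terminal vertices are removed by a derivation, and a vertex with an up-descendant still present cannot itself be terminal. Consequently, for every $y \in X^\beta$ the support (with $v(y)$ as in Remark \ref{R:v(y)}) is exactly the path from the initial vertex of the component containing $v(y)$ up to $v(y)$, so
\[\supp(z) \subseteq \supp(y) \iff v(z) \le v(y),\]
and proper containment reduces to the strict inequality $v(z) < v(y)$ in the partial order of $F_\alpha$.

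Assume $v(z) < v(y)$, where $z \in X^\gamma$ with $\gamma \le \beta$ and $y \in X^{\beta+1}$. Let $u$ be the unique $F_\alpha$-up-neighbor of $v(z)$ lying on the path from $v(z)$ toward $v(y)$. Applying downward-closedness to $v(y) \in F_\alpha^{\beta+1}$ yields $u \in F_\alpha^{\beta+1} \subseteq F_\alpha^{\gamma+1}$. On the other hand, by the very definition of $v(z)$ as the topmost vertex of $P_w \cap F_\alpha^\gamma$ (where $w$ is the terminal of $F_\alpha$ producing $z$), the next vertex $u'$ of $P_w$ above $v(z)$ fails to belong to $F_\alpha^\gamma$---and such a $u'$ exists because the mere presence of $u$ shows that $v(z)$ is not terminal in $F_\alpha$ itself. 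Since no vertex is deleted at a limit stage, $u'$ must have been removed at some successor stage $\gamma_1+1 \le \gamma$, meaning that in $F_\alpha^{\gamma_1}$ the vertex $u'$ was terminal and belonged to an infinite family of terminal up-neighbors of $v(z)$.

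At this point Lemma \ref{L:UpTerm} finishes the argument: since $v(z)$ has infinitely many terminal up-neighbors in $F_\alpha^{\gamma_1}$, \emph{every} up-neighbor of $v(z)$ in $F_\alpha^{\gamma_1}$ is terminal, so all are deleted simultaneously at stage $\gamma_1+1$. Now $u$ and $v(z)$ both lie in $F_\alpha^{\gamma_1}$ (since $u \in F_\alpha^{\gamma+1} \subseteq F_\alpha^{\gamma_1}$), and the $F_\alpha$-adjacency between them is inherited by any subforest whose vertex set contains both endpoints. Thus $u$ is among the up-neighbors killed at stage $\gamma_1+1$, yielding $u \notin F_\alpha^{\gamma_1+1}$ and contradicting $u \in F_\alpha^{\gamma+1} \subseteq F_\alpha^{\gamma_1+1}$. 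The main obstacle I anticipate is only careful bookkeeping---pinpointing the correct deletion stage $\gamma_1$ and verifying that adjacency of $v(z)$ and $u$ is preserved through successive derivations---since the conceptual work is done by the ``all-or-nothing'' deletion principle extracted from Lemma \ref{L:UpTerm}, which is precisely what rules out the possibility that $u$ survives while $u'$ is removed.
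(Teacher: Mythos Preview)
Your proof is correct and follows essentially the same approach as the paper's: both argue by contradiction, locating a stage at which $v(z)$ acquires infinitely many terminal up-neighbors while the up-neighbor $u$ on the path toward $v(y)$ survives, and then invoking Lemma~\ref{L:UpTerm}. Your version is simply more explicit---you spell out the downward-closedness of the derived forests, pin down the precise successor stage $\gamma_1+1$ at which $u'$ is deleted, and verify that adjacency persists---whereas the paper compresses all of this into three sentences.
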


\begin{proof}
In fact, if the support of $z$ is contained in the support of $y$,
then, on one hand, the vertex $v(z)$ has infinitely many terminal
up-neighbors in some $X^\gamma$, $\gamma<\beta$. On the other
hand, it means that not all of up-neighbors of $v(z)$ are in that
$X^\gamma$, because otherwise in could not happen that $y\in
X^{\beta+1}$. We get a contradiction with Lemma \ref{L:UpTerm}.
\end{proof}

\section{Proof of Theorem \ref{T:Successor}}\label{S:PrfThrm}

The main steps in our proof of Theorem \ref{T:Successor} are the
following:

\begin{enumerate}[{\bf (A)}]

\item \label{I:A} For every $\beta\le\alpha$

\begin{equation}\label{E:ContConv}\conv\left(\bigcup_{0\le \gamma\le \beta}
X^\gamma\right)\subset A^{(\beta)}.\end{equation}

\item \label{I:B} For every $\beta\le\alpha$

\begin{equation}\label{E:InclBeta}
A^{(\beta)}\subset\overline{\conv\left(\bigcup_{0\le \gamma\le
\beta} X^\gamma\right)}.\end{equation}

\item\label{I:C} If $\beta<\alpha$, then $X^{\beta+1}\backslash\,
\overline{\conv\left(\bigcup_{0\le \gamma\le \beta}
X^\gamma\right)}\ne\emptyset$.

\item\label{I:D} The weak$^*$ sequential closure of
$\overline{\conv\left(\bigcup_{0\le \gamma\le \alpha}
X^\gamma\right)}$ coincides with
$\overline{\conv\left(\bigcup_{0\le \gamma\le \alpha}
X^\gamma\right)}$. Therefore $\overline{\conv\left(\bigcup_{0\le
\gamma\le \alpha} X^\gamma\right)}$ is weak$^*$ closed.

\item \label{I:E} The inclusion in \eqref{E:InclBeta} becomes an
equality if $\alpha$ is a successor ordinal and $\beta=\alpha$.

\end{enumerate}

\subsection{Proof of item \eqref{I:A}}

Since convexity is preserved under weak$^*$ sequential closures,
to prove \eqref{I:A} by induction it suffices to show that
$X^\beta\subset A^{(\beta)}$ for every $\beta\le\alpha$.

The inclusion $X^1\subset A^{(1)}$ can be derived from the
definitions as follows. The definitions imply that $y\in
X^1\backslash X^0$ if and only if there is an infinite sequence of
terminal vertices $\{v_n\}\subset F_\alpha$ having the same
down-neighbor $u$, such that $y=z^*(u)$ (we use for non-terminal
vertices the same notation as in \eqref{E:VericesA}). Let
$x_n=z^*(v_n)$. Then, as it is easy to see, $x_n\in X$ and
$y=w^*-\lim_{n\to\infty} x_n$.

In a similar way, if we know that $X^\beta\subset A^{(\beta)}$ we
derive $X^{\beta+1}\subset A^{(\beta+1)}$ from the fact that each
element of $X^{\beta+1}\backslash X^\beta$ is a limit of a
weak$^*$ convergent sequence of elements of $X^\beta$.

On limit ordinals. The definition
$(F_\alpha)^\beta=\bigcap_{\gamma<\beta}(F_\alpha)^\gamma$ for the
derived forest of order $\beta$ with a limit ordinal $\beta$
implies that
\begin{equation}\label{E:XbetaLimit}
X^\beta\subset \bigcup_{\gamma<\beta}X^\gamma
\end{equation}
for a limit ordinal $\beta$. Combining inclusion
\eqref{E:XbetaLimit} with the definition of the weak$^*$ derived
set $A^{(\beta)}$ for a limit ordinal $\beta$ (see
\eqref{E:DefLimDer}), we get that the validity of inclusion
\eqref{E:ContConv} for all ordinals $\tau<\beta$ implies its
validity for a limit ordinal $\beta$.

\subsection{Proof of item \eqref{I:B}}\label{S:Beta=1}

We prove \eqref{E:InclBeta} by induction. Of course, we have the
inclusion for $\beta=0$.

The next step is to suppose that we have
\[A^{(\beta)}\subset\overline{\conv\left(\bigcup_{0\le \gamma\le
\beta} X^\gamma\right)},\] and to use this inclusion to derive
\[A^{(\beta+1)}\subset\overline{\conv\left(\bigcup_{0\le \gamma\le
\beta+1} X^\gamma\right)}.\]

To achieve this it is clearly enough to show that
\[\left(\conv\left(\bigcup_{0\le \gamma\le
\beta}
X^\gamma\right)\right)^{(1)}\subset\overline{\conv\left(\bigcup_{0\le
\gamma\le \beta+1} X^\gamma\right)}.
\]

Proof of the step $\beta\to \beta+1$ will complete the proof of
\eqref{E:InclBeta}, because for a limit ordinal $\beta$ the
inclusion \eqref{E:InclBeta} follows immediately from the
definition of $A^{(\beta)}$ for a limit ordinal $\beta$, provided
\eqref{E:InclBeta} has been already proved for all $\tau<\beta$.
\medskip

So we prove the step $\beta\to\beta+1$.

Since the set $\conv\left(\bigcup_{0\le \gamma\le \beta}
X^\gamma\right)$ is a subset of the dual of a separable Banach
space, any element of its weak$^*$ derived set is a weak$^*$ limit
of a bounded sequence of the form
\begin{equation}\label{E:SeqConvC}
\left\{\sum_{x\in W}a_{x,i}x\right\}_{i=1}^\infty,\hbox{ where }
a_{x,i}\ge 0,\quad \sum_{x\in W}a_{x,i}=1,
\end{equation}
where $W=\bigcup_{0\le \gamma\le \beta} X^\gamma$ and the set
$\{a_{x,i}\}_{x\in W}$ is finitely nonzero for any
$i\in\mathbb{N}$.

For each $x\in W$ we consider the vertex $v(x)$ in $F_\alpha$ (see
the definition in Remark \ref{R:v(y)}). It can happen that for
some $x\in W$ the vertex $v(x)$ is an initial vertex. We denote
the set of all such $x\in W$ by $I$.

For $x\in(W\backslash I)$ denote by $v(y)$ the down-neighbor of
$v(x)$ in $F_\alpha$ and denote by $y=y(x)$ the vector in $Z^*$
obtained if we replace by $0$ the component of $x$ corresponding
to $v(x)$, so that $v(y)$ agrees with the definition in Remark
\ref{R:v(y)}.

We group the summands of $\sum_x a_{x,i}x$ for $x\in(W\backslash
I)$ according to vectors $y=y(x)$ defined in the previous
paragraph. We denote the set of all such vectors $y$ obtained for
different $x\in (W\backslash I)$  by $D$. We can write
\[\sum_{x\in W} a_{x,i}x=\sum_{x\in I} a_{x,i}x+\sum_{y\in D}~~\sum_{\{x:\, v(x)\succ v(y)\}}
a_{x,i}x,\] where $v(x)\succ v(y)$ means that $v(y)$ is a
down-neighbor of $v(x)$.

We may assume without loss of generality that $\lim_{i\to\infty}
\sum_{\{x:\, v(x)\succ v(y)\}} a_{x,i}$ exists for every $y\in D$
and denote this limit by $s_y$. We may also assume that
$\lim_{i\to\infty} a_{x,i}$ exists for every $x\in W$ and denote
this limit by $p_x$.

\begin{lemma}\label{L:Sum1} If the sequence $\left\{\sum_{x\in W} a_{x,i}x\right\}_{i=1}^\infty$ is bounded, then $\sum_{x\in I}p_x+\sum_{y\in D}
s_y=1$.
\end{lemma}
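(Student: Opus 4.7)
Since $\sum_{x\in W} a_{x,i}=1$ splits as $\sum_{x\in I} a_{x,i}+\sum_{y\in D} S_{y,i}=1$ (writing $S_{y,i}:=\sum_{x:v(x)\succ v(y)} a_{x,i}$), a short Fatou-type argument already gives $\sum_{x\in I}p_x+\sum_{y\in D}s_y\le 1$. My plan is to prove the reverse inequality by showing that the sequence of probability measures $\mu_i$ on the countable set $I\cup D$ defined by $\mu_i(\{x\})=a_{x,i}$ for $x\in I$ and $\mu_i(\{y\})=S_{y,i}$ for $y\in D$ is \emph{tight}; combined with the assumed pointwise convergence, tightness forces $\mu(I\cup D)=1$. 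Tightness will be extracted from the norm-boundedness of $u_i:=\sum_{x\in W} a_{x,i}x$ via the functional $z^{**}\in Z^{**}$ and property \ref{I:Le}.

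The key structural observation is that each derived forest $F_\alpha^\gamma$ is closed under taking predecessors in $F_\alpha$: derivation only removes terminal vertices, and if any vertex becomes a leaf of some $F_\alpha^\tau$, its descendants in $F_\alpha$ must already have been deleted. Consequently, for any $x\in X^\gamma$, its support is a contiguous prefix $\{n_1,\dots,n_l\}$ of the underlying path in $F_\alpha$, and since $z^{**}(z^*_j)=1$ for every $j$, we obtain $z^{**}(x)=n_0+n_1+\cdots+n_{l-1}$ (with $n_0=n_1$). This specializes to $z^{**}(x)=v(x)$ for $x\in I$ and to $z^{**}(x)=z^{**}(y(x))+v(y(x))$ for $x\in W\setminus I$, because the removed top coefficient $n_{l-1}$ is exactly the label of the immediate predecessor of $v(x)$, which by prefix-closedness equals $v(y(x))$.

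Applying $z^{**}$ to $u_i$ and invoking property \ref{I:Le} then gives
\[
CM\ \ge\ z^{**}(u_i)\ =\ \sum_{x\in I} v(x)\,a_{x,i}+\sum_{y\in D}\bigl(z^{**}(y)+v(y)\bigr)S_{y,i},
\]
where $M:=\sup_i\|u_i\|$. All summands being nonnegative, Markov's inequality yields the uniform tail bound $\mu_i(F_R^c)\le 2CM/R$ for every $R>0$, where $F_R:=\{x\in I:v(x)\le R\}\cup\{y\in D:v(y)\le R\}$. The set $F_R$ is finite: elements of $I$ are of the form $v(x)z^*_{v(x)}$, and each $y\in D$ with $v(y)\le R$ has its support (a path-prefix whose labels are all $\le v(y)\le R$ because labels increase along paths) inside $\{1,\dots,R\}$ with coefficients bounded by $R$. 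Letting $i\to\infty$ in $1=\mu_i(F_R)+\mu_i(F_R^c)$, the finite sum $\mu_i(F_R)$ converges to $\mu(F_R)$, giving $\sum_{x\in I} p_x+\sum_{y\in D} s_y\ge\mu(F_R)\ge 1-2CM/R$; letting $R\to\infty$ completes the argument. The main obstacle, in my view, is the structural step --- the exact identification of the top coefficient with $v(y(x))$; without this, property \ref{I:Le} would only deliver a weaker tail bound insufficient for tightness.
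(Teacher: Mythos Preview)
Your proof is correct and follows essentially the same approach as the paper: both use the functional $z^{**}$ and the fact that the integer labels $v(x)$, $v(y)$ grow without bound to show that mass in the convex combinations cannot escape to infinity. The only difference is cosmetic---the paper argues by contradiction (if $\sum p_x+\sum s_y<1$, then for every $M$ some mass $\ge 1-\omega-\ep$ sits on labels $\ge M$, forcing $z^{**}(u_i)\ge M(1-\omega-\ep)/C$), whereas you phrase the same estimate as a Markov-inequality tightness bound $\mu_i(F_R^c)\le CM/R$ on probability measures.
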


\begin{proof} In fact, suppose $\sum_{x\in I}p_x+\sum_{y\in D}
s_y=\omega<1$. For every pair of finite subsets $G\subset I$ and
$F\subset D$, and every $\ep>0$, there is $j\in\mathbb{N}$ such
that
\[\sum_{x\in G}(a_{x,i}-p_x)+\sum_{y\in F}\left(\left(\sum_{\{x:\, v(x)\succ v(y)\}}
a_{x,i}\right)-s_y\right)<\ep \hbox{ for }i\ge j.\]

Therefore

\[\sum_{x\in (I\backslash G)}a_{x,i}+\sum_{y\in (D\backslash F)}\left(\sum_{\{x:\, v(x)\succ v(y)\}}
a_{x,i}\right)>1-(\omega+\ep) \hbox{ for }i\ge j.\]

For any $M\in \mathbb{N}$, we can pick $F$ in such a way that for
all $y\in (D\backslash F)$ the natural number corresponding to
$v(y)$ in the identification described in Section \ref{S:Sets} is
at least $M$. (For this and the next statement we need to recall
that $n_0=n_1$ in \eqref{E:VericesA}.)

Similarly, we can pick $G$ in such a way that for all $x\in
(I\backslash G)$, the natural number corresponding to $x$ (recall
that $n_0=n_1$, see the line after \eqref{E:VericesA}) is at least
$M$. Then, by {\bf \eqref{I:Le}} in Section \ref{S:RedSeparable},
\[\begin{split}\left\|\sum_{x\in W} a_{x,i}x\right\|&\ge
\frac1C\,z^{**}\left(\sum_{x\in W} a_{x,i}x\right)\\&\ge
\frac1C\,z^{**} \left(\sum_{x\in (I\backslash
G)}a_{x,i}x+\sum_{y\in (D\backslash F)}\left(\sum_{\{x:\,
v(x)\succ v(y)\}} a_{x,i}x\right)\right)\\&\ge
\frac{M(1-(\omega+\ep))}{C}.\end{split}\] Since this can be done
for every $M\in\mathbb{N}$ and every $\ep>0$, we conclude that
$\left\{\sum_{x\in W} a_{x,i}x\right\}_{i=1}^\infty$ is unbounded.
This contradiction proves the lemma.\end{proof}

\begin{lemma}\label{L:StrInL1} If $\sum_{x\in I}p_x+\sum_{y\in D} s_y=1$, then the vectors $\left\{\sum_{\{x:\, v(x)\succ v(y)\}}
a_{x,i}\right\}_{y\in D}$ converge to the vector $\{s_y\}_{y\in
D}$ strongly in $\ell_1(D)$ and the vectors $\{a_{x,i}\}_{x\in I}$
converge to $\{p_x\}_{x\in I}$ in $\ell_1(I)$.
\end{lemma}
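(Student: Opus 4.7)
My plan is to recognize this as a concrete instance of Scheffé's lemma on the countable index set $I\sqcup D$ equipped with counting measure, and then verify its hypotheses from what we have in hand.

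First I would record the partition of $W$: since every $x\in W\setminus I$ lies in exactly one fiber $\{x:v(x)\succ v(y)\}$ for a unique $y=y(x)\in D$, we have
\[
\sum_{x\in I}a_{x,i}+\sum_{y\in D}\Bigl(\sum_{\{x:v(x)\succ v(y)\}}a_{x,i}\Bigr)=\sum_{x\in W}a_{x,i}=1
\]
for every $i$. Writing $b_{y,i}=\sum_{\{x:v(x)\succ v(y)\}}a_{x,i}$, the hypothesis of the lemma says that the $\ell_1^+$-sequence $(a_{\cdot,i},b_{\cdot,i})$ on $I\sqcup D$ converges coordinatewise to $(p_\cdot,s_\cdot)$ and that the limit has the same total mass $1$ as each term of the sequence. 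This is precisely the setting of Scheffé's theorem.

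Next I would execute the standard Scheffé argument. Set $u_{x,i}=(p_x-a_{x,i})^+$ and $v_{y,i}=(s_y-b_{y,i})^+$. Each of these is bounded coordinatewise by the summable sequences $\{p_x\}_{x\in I}$ and $\{s_y\}_{y\in D}$ (whose total is $1$), and each converges to $0$ pointwise. Dominated convergence on the counting-measure space $I\sqcup D$ therefore gives
\[
\sum_{x\in I}u_{x,i}+\sum_{y\in D}v_{y,i}\longrightarrow 0.
\]
Because $\sum_{x\in I}(p_x-a_{x,i})+\sum_{y\in D}(s_y-b_{y,i})=1-1=0$ for each $i$, the sum of the negative parts equals the sum of the positive parts, so it too tends to $0$. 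Adding positive and negative parts yields the desired $\ell_1$ convergence of $\{a_{x,i}\}_{x\in I}$ to $\{p_x\}_{x\in I}$ and of $\{b_{y,i}\}_{y\in D}$ to $\{s_y\}_{y\in D}$.

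The only step that requires any care is the equality of total masses. I would justify it by splitting off a finite subset as in the proof of Lemma \ref{L:Sum1}: pick finite $G\subset I$, $F\subset D$ so that $\sum_{x\in G}p_x+\sum_{y\in F}s_y$ is within $\varepsilon$ of $1$, use coordinatewise convergence on the finite set $G\cup F$, and then use $\sum_{x\in W}a_{x,i}=1$ to control the tails. No new ingredient beyond Lemma \ref{L:Sum1} and nonnegativity is needed; the rest is a direct application of dominated convergence.
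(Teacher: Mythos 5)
Your proof is correct and follows essentially the same route as the paper, which simply invokes the fact that a pointwise convergent sequence of normalized vectors in $\ell_1$ with normalized limit converges strongly (i.e., Scheff\'e's lemma); you have merely supplied the standard positive/negative-part argument for that fact. Your final paragraph is unnecessary, since the equality of total masses is immediate: each term has mass $\sum_{x\in W}a_{x,i}=1$ by \eqref{E:SeqConvC}, and the limit has mass $1$ by the hypothesis of the lemma.
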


Lemma \ref{L:StrInL1} is an immediate consequence of the fact that
a sequence of normalized vectors $\{v_i\}$ in $\ell_1$ which
converges pointwise to a normalized vector $v$, converges to $v$
strongly.

\begin{lemma}\label{L:ConvWp_x} The series $\sum_{x\in W} p_xx$ and $\sum_{x\in I}
p_xx$ are strongly convergent.
\end{lemma}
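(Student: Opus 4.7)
The plan is to reduce strong convergence to absolute norm convergence by establishing $\sum_{x\in W} p_x\|x\|<\infty$. The key tools are observations \textbf{(\ref{I:Le})} and \textbf{(\ref{I:Ge})} from Section \ref{S:RedSeparable}: since each $x\in W$ is a nonnegative linear combination of $\{z_i^*\}$ (by \eqref{E:VericesA} and the definition of the shortenings $X^\gamma$), one has the sandwich $c\|x\|\le z^{**}(x)\le C\|x\|$, and the same is true for any finite convex combination of elements of $W$.

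Let $M$ be a uniform bound on $\left\|\sum_{x\in W}a_{x,i}x\right\|$, which exists by hypothesis. Applying $z^{**}$ and using observation \textbf{(\ref{I:Le})}, I would obtain
\[\sum_{x\in W} a_{x,i}\,z^{**}(x) \;=\; z^{**}\!\left(\sum_{x\in W}a_{x,i}x\right)\;\le\; CM\]
for every $i$ (the left-hand side is a finite sum, as only finitely many $a_{x,i}$ are nonzero). All terms are nonnegative and $a_{x,i}\to p_x$, so for any finite subset $F\subset W$, passing to the limit $i\to\infty$ in the finite sum $\sum_{x\in F}a_{x,i}z^{**}(x)\le CM$ yields $\sum_{x\in F} p_x\,z^{**}(x)\le CM$; as this bound is uniform in $F$, it follows that $\sum_{x\in W} p_x\,z^{**}(x)\le CM$.

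Observation \textbf{(\ref{I:Ge})} then gives
\[\sum_{x\in W} p_x\|x\| \;\le\; \frac{1}{c}\sum_{x\in W}p_x\,z^{**}(x)\;\le\; \frac{CM}{c}<\infty.\]
Absolute norm convergence implies strong convergence of $\sum_{x\in W} p_x x$ in $Z^*$, and the subseries $\sum_{x\in I} p_x x$ converges by comparison since $I\subset W$. I do not anticipate any substantive obstacle: the one point that must be verified is that every $x\in W$ is genuinely a nonnegative combination of the $z_i^*$ with coefficients $n_{i-1}\ge 1$, which is immediate from \eqref{E:VericesA} together with the convention $n_0=n_1$.
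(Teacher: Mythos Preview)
Your proof is correct and takes essentially the same approach as the paper: both arguments pivot on the sandwich $c\|x\|\le z^{**}(x)\le C\|x\|$ from observations \textbf{(\ref{I:Le})} and \textbf{(\ref{I:Ge})} and the uniform bound $z^{**}\!\left(\sum_{x\in W}a_{x,i}x\right)\le C\left\|\sum_{x\in W}a_{x,i}x\right\|$. The paper phrases it as a contradiction (if the series fails to converge then $\sum_{x\in W} p_xz^{**}(x)=\infty$, forcing $\sum_{x\in W} a_{x,i}z^{**}(x)\to\infty$), whereas you run the same estimate directly via a Fatou-type limit over finite subsets; your version is slightly more explicit but otherwise identical in content.
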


\begin{proof} In fact, otherwise by items \eqref{I:Le} and \eqref{I:Ge} in Section \ref{S:RedSeparable}, the series
 $\sum_{x\in W} p_xz^{**}(x)$ diverges to infinity. This divergence
 implies that $\sum_{x\in W} a_{x,i} z^{**}(x)\le C\left\|\sum_{x\in W}
 a_{x,i} x\right\|$ tend to infinity as $i\to\infty$ contradicting
 the boundedness of the sequence $\left\{\sum_{x\in W} a_{x,i}x\right\}_{i=1}^\infty$
\end{proof}

\begin{corollary} If the sequence $\left\{\sum_{x\in W} a_{x,i}x\right\}_{i=1}^\infty$ is bounded in $Z^*$, then the sequence $\{\sum_{x\in I}
a_{x,i}x\}_{i=1}^\infty$ weak$^*$ converges to the vector
$\sum_{x\in I}p_x x$.
\end{corollary}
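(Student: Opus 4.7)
\medskip
\noindent\textbf{Proof plan.} Write $u_i := \sum_{x \in I} a_{x,i} x$ and $u := \sum_{x \in I} p_x x$. The plan is to establish three facts: (a) $u$ converges strongly in $Z^*$; (b) $\{u_i\}_{i=1}^\infty$ is uniformly norm-bounded in $Z^*$; (c) $u_i(z_m) \to u(z_m)$ for every basis vector $z_m$. Since $\lin\{z_m\}_{m=1}^\infty$ is dense in $Z$, a standard three-$\ep$ argument then upgrades (b) and (c) to weak$^*$ convergence $u_i \to u$ on all of $Z$.

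Fact (a) is immediate: the hypothesis together with Lemma \ref{L:Sum1} gives $\sum_{x\in I}p_x+\sum_{y\in D}s_y=1$, which is exactly the hypothesis of Lemma \ref{L:ConvWp_x}, so $u$ is strongly convergent. Fact (c) is essentially immediate once we observe that if $v(x)$ is initial then the path corresponding to $x$ reduces to the single vertex $v(x)$, so every $x\in I$ has the form $x=n(x)z^*_{n(x)}$. The injective identification of vertices of $F_\alpha$ with $\mathbb{N}$ implies that for each $m\in\mathbb{N}$ there is at most one $x(m)\in I$ with $n(x(m))=m$. By biorthogonality $u_i(z_m)=m\cdot a_{x(m),i}$ (and $0$ if no such $x(m)$ exists), while Lemma \ref{L:StrInL1} provides $a_{x(m),i}\to p_{x(m)}$, so $u_i(z_m)\to m\,p_{x(m)}=u(z_m)$.

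The real work is (b), which I expect to be the main obstacle. Every $x\in W$ is a nonnegative combination of $\{z_i^*\}$, so items \eqref{I:Le} and \eqref{I:Ge} of Section \ref{S:RedSeparable} together yield the two-sided estimate $c\|x\|\le z^{**}(x)\le C\|x\|$ for every $x\in W$. Applying $z^{**}$ to the convex combination gives
\[
c\sum_{x\in W}a_{x,i}\|x\|\le\sum_{x\in W}a_{x,i}z^{**}(x)=z^{**}\!\Bigl(\sum_{x\in W}a_{x,i}x\Bigr)\le C\Bigl\|\sum_{x\in W}a_{x,i}x\Bigr\|\le CM,
\]
where $M:=\sup_i\bigl\|\sum_{x\in W}a_{x,i}x\bigr\|$ is finite by hypothesis. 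Since $I\subset W$ and all $a_{x,i}\ge 0$, this forces $\|u_i\|\le\sum_{x\in I}a_{x,i}\|x\|\le CM/c$ uniformly in $i$. The delicate point is that uniform boundedness of the net $\{\sum_{x\in W}a_{x,i}x\}_i$ does not obviously transfer to a positive sub-sum; the comparison of $z^{**}$ with $\|\cdot\|$ on positive combinations is precisely what provides the needed control on the total mass $\sum_{x\in W}a_{x,i}\|x\|$ and therefore on $\|u_i\|$.
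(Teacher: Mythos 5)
Your proof is correct and follows essentially the same route as the paper's: uniform boundedness of the sub-sum plus coordinatewise convergence (trivial because each $x\in I$ has a single nonzero coordinate), upgraded to weak$^*$ convergence by the standard density argument. The only difference is that you spell out the uniform boundedness of $\sum_{x\in I}a_{x,i}x$ via the two-sided comparison of $z^{**}$ with the norm on nonnegative combinations, a detail the paper declares ``very easy to see''; your argument for it is exactly the intended one.
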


\begin{proof} In fact, the mentioned sequence is uniformly bounded
and convergent coordinate-wise (this is very easy to see because
$x\in I$ are vectors with one nonzero coordinate each). By the
well-known simple fact, they converge in the weak$^*$ topology.
\end{proof}

\begin{lemma}\label{L:SeriesD} The series $\sum_{y\in D} s_y y$, and thus $\sum_{y\in
D}\left(s_y-\sum _{v(x)\succ v(y)}p_x\right)y$, is strongly
convergent.\end{lemma}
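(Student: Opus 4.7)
The plan is to mimic Lemma \ref{L:ConvWp_x}: use items \textbf{\eqref{I:Le}} and \textbf{\eqref{I:Ge}} of Section \ref{S:RedSeparable} to bound norms $\|y\|$ by the values $z^{**}(y)$, and to bound the $z^{**}$-masses in turn by the uniform bound $M:=\sup_i\|\sum_{x\in W}a_{x,i}x\|<\infty$.

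The first step is a telescoping observation. For $x\in W\setminus I$, the definition \eqref{E:VericesA} gives $x-y(x)=v(y(x))\,z^*_{v(x)}$, so
\[z^{**}(x)=z^{**}(y(x))+v(y(x)).\]
Because $v(y(x))\ge 1$ and each $x\in W$ is a nonnegative linear combination of the $z_i^*$, we obtain $0\le z^{**}(y)\le z^{**}(x)$ whenever $v(x)\succ v(y)$. Moreover, the families $\{x\in W\setminus I:v(x)\succ v(y)\}$ indexed by $y\in D$ are pairwise disjoint, since $y(x)$ is uniquely determined by $x$.

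Now I fix a finite $F\subset D$. The two observations above combine with \textbf{\eqref{I:Le}} to give
\[\sum_{y\in F}\left(\sum_{v(x)\succ v(y)}a_{x,i}\right)z^{**}(y)\le\sum_{x\in W}a_{x,i}z^{**}(x)=z^{**}\!\left(\sum_{x\in W}a_{x,i}x\right)\le CM.\]
Letting $i\to\infty$ (a legitimate passage because the left side is a finite sum) and then exhausting $D$ by such $F$ yields $\sum_{y\in D}s_y z^{**}(y)\le CM$. Since each $y\in D$ is itself a nonnegative linear combination of the $z_i^*$, \textbf{\eqref{I:Ge}} provides $c>0$ with $\|y\|\le z^{**}(y)/c$ uniformly in $y\in D$, so $\sum_{y\in D}s_y\|y\|\le CM/c<\infty$ and $\sum_{y\in D}s_y y$ converges absolutely, hence strongly, in $Z^*$. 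For the ``thus'' clause, Fatou's lemma applied to the nonnegative scalars $\{a_{x,i}\}_{v(x)\succ v(y)}$ gives $0\le\sum_{v(x)\succ v(y)}p_x\le s_y$, so the coefficients of $\sum_{y\in D}\bigl(s_y-\sum_{v(x)\succ v(y)}p_x\bigr)y$ are dominated in absolute value by $s_y$, and absolute convergence transfers.

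The main obstacle I anticipate is the bookkeeping underlying the telescoping identity and the disjointness of the index families, which comes from the path structure in $F_\alpha$. Once these structural facts are cleanly stated, the estimate reduces to the same norm-vs-$z^{**}$ sandwich used in Lemma \ref{L:ConvWp_x}.
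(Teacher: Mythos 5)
Your argument is correct and is essentially the paper's own proof written in direct rather than contrapositive form: the paper assumes divergence and derives unboundedness of $\left\{\sum_{x\in W}a_{x,i}x\right\}$ from the same three ingredients you use, namely items \eqref{I:Le} and \eqref{I:Ge}, the comparison $z^{**}(y)\le z^{**}(x)$ for $v(x)\succ v(y)$, and the disjointness of the families $\{x:v(x)\succ v(y)\}$. Your only addition is to make the comparison explicit via the telescoping identity $z^{**}(x)=z^{**}(y(x))+v(y(x))$, which the paper leaves implicit; there is no gap.
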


\noindent The first statement implies the second statement by
virtue of an easy inequality $s_y\ge\sum _{x:~v(x)\succ v(y)}p_x$.

\begin{proof} Assume the contrary. Since the vectors $y$ are nonnegative and $s_y\ge 0$,
the contrary, by \eqref{I:Le} and \eqref{I:Ge} on page
\pageref{I:Ge}, implies that $\sum_y s_yz^{**}(y)$ diverges to
$\infty$. On the other hand, for each $y\in D$ and sufficiently
large $i=i(y)$ we have
\[z^{**}\left(\sum_{x:~v(x)\succ v(y)} a_{x,i}x\right)\ge \frac12\,s_yz^{**}(y).\]
Since the sets $\{x:~v(x)\succ v(y)\}$ with different $y$ are
disjoint, we conclude that sums $\sum_{x\in W} a_{x,i}x$ cannot be
uniformly bounded.
\end{proof}

\begin{lemma} The sequence of vectors
\begin{equation}\label{E:A_iy}\sum_{x\in W} a_{x,i}x=\sum_{x\in I} a_{x,i}x+\sum_{y\in
D}~~\sum_{\{x:\, v(x)\succ v(y)\}} a_{x,i}x,~ i\in
\mathbb{N},\end{equation} converges to the vector
\begin{equation}\label{E:B_y} \sum_{x\in I} p_xx +\sum_{y\in D}\left(\sum_{\{x:\, v(x)\succ v(y)\}} p_x x
+\left(s_y-\sum_{\{x:\, v(x)\succ v(y)\}} p_x\right)y\right)
\end{equation}
in the weak$^*$ topology.
\end{lemma}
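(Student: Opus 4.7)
The plan is to exploit the fact that, since $\{z_n\}$ is a Schauder basis of $Z$, a bounded sequence in $Z^*$ converges weak$^*$ to a given vector iff the coordinate functionals $\langle\cdot,z_m\rangle$ converge for every $m\in\mathbb{N}$. The sequence \eqref{E:A_iy} is bounded by hypothesis, so it suffices to show that each of its coordinates converges to the corresponding coordinate of the proposed limit \eqref{E:B_y}, at which point the coordinate-wise characterization delivers weak$^*$ convergence to \eqref{E:B_y}.

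The coefficient of $z^*_m$ inside any $x\in W$ is nonzero precisely when $m\in\supp x$, and in that case equals the label of the original down-neighbor of $m$ in $F_\alpha$ (or $m$ itself, if $m$ is an initial vertex), because the chain from the initial vertex up to $m$ is determined by $m$ alone and the coefficients are inherited unchanged from the original, unshortened vector in $X$. Denote this common constant by $\hat m$. Setting $T_m:=\{x\in W:m\in\supp x\}$, the coefficient of $z^*_m$ in \eqref{E:A_iy} equals $\hat m\sum_{x\in T_m}a_{x,i}$. I partition $T_m$ into three disjoint pieces: (a) $T_m\cap I$, a singleton when $m$ is initial and empty otherwise; (b) $\{x\in W\setminus I:v(x)=m\}$; and (c) $\bigsqcup_{y\in D_m}\{x:v(x)\succ v(y)\}$, where $D_m:=\{y\in D:m\in\supp y\}$.

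On piece (a), the pointwise convergence $a_{x,i}\to p_x$ assumed in Section~\ref{S:Beta=1} yields the limit $p_x$. Writing $b^i_y:=\sum_{v(x)\succ v(y)}a_{x,i}$, Lemma~\ref{L:StrInL1} asserts $(b^i_y)\to(s_y)$ in $\ell_1(D)$, so piece (c) contributes $\sum_{y\in D_m}s_y$ by restricting the $\ell_1$-convergence to the sub-index set $D_m$. On piece (b), the map $x\mapsto y(x)$ embeds the set into $D$ with the pointwise estimate $a_{x,i}\le b^i_{y(x)}$, and a standard finite-block-plus-$\ell_1$-tail argument yields $\sum a_{x,i}\to\sum p_x$. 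Applying the same three-way split to the proposed limit \eqref{E:B_y} and writing $q_y:=s_y-\sum_{v(x)\succ v(y)}p_x$, its $z^*_m$-coefficient equals $\hat m\bigl(\sum_{T_m}p_x+\sum_{y\in D_m}q_y\bigr)$; after the cross term $\sum_{y\in D_m}\sum_{v(x)\succ v(y)}p_x$ cancels, this is precisely the limit of $\hat m\sum_{T_m}a_{x,i}$ computed above.

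The principal bookkeeping obstacle is piece (b): one must verify that the ``top-vertex'' mass $\sum_{x\in W\setminus I,\,v(x)=m}a_{x,i}$ actually has a limit equal to the sum of the corresponding $p_x$'s (not merely a convergent subsequence), and this is where the $\ell_1$-domination by $(b^i_{y(x)})$ together with Lemma~\ref{L:StrInL1} is essential. The other nontrivial point is the uniqueness of $\hat m$ across different derived forests $F_\alpha^\gamma$; this follows because the shortening in the definition of $X^\gamma$ retains the original coefficients $n_{i-1}$ from $X$, so the constant $\hat m$ depends only on the partial-order structure of $F_\alpha$ and not on which $\gamma$ the vector $x$ belongs to.
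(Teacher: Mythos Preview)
Your proof is correct and follows essentially the same approach as the paper's: reduce to coordinate-wise convergence via boundedness of the sequence, partition the set of $x\in W$ whose support contains $m$, and invoke Lemma~\ref{L:StrInL1} to handle the $D_m$-indexed sum. One simplification you missed: your piece~(b) is in fact at most a singleton, since a vector $x\in W$ is uniquely determined by $v(x)$ (the path to $v(x)$ and the coefficients $n_{i-1}$ along it are fixed by the forest structure), so the $\ell_1$-domination argument you flag as the ``principal bookkeeping obstacle'' is unnecessary---the assumed pointwise convergence $a_{z,i}\to p_z$ already suffices, which is what the paper uses implicitly.
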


\begin{proof} We know that vectors \eqref{E:A_iy} are uniformly
bounded. Because of this it is enough to prove that the vectors
\eqref{E:A_iy} converge to the vector \eqref{E:B_y} componentwise.

Let us consider their $m^{\rm th}$ components. Assume that $m$ is
in the image of $F_\alpha$ and that the path to $m$ from the
initial vertex of the component containing $m$ is
$n_1,\dots,n_k=m$. There are two slightly different cases: $k=1$
and $k\ge 2$. We consider the case $k\ge 2$. The change which
should be made if $k=1$ is: replace $n_{k-1}$ by $n_1$ in the
formulas below and add the corresponding term for $x\in I$.

In the case $k\ge 2$ the $m^{\rm th}$ component of the vector
\eqref{E:B_y} is

\[p_zn_{k-1}+\sum_{\stackrel{y\in D}{m\in\supp y}}s_yn_{k-1},
\]
where in the first term, $z$ is such that $m=v(z)$. On the other
hand, the $m^{\rm th}$ components of the vectors \eqref{E:A_iy}
are
\[a_{z,i}n_{k-1}+\sum_{\stackrel{y\in D}{m\in\supp y}}\sum_{x:~v(x)\succ v(y)}a_{x,i}n_{k-1},
\]
where in the first term $z$ is such that $m=v(z)$.

We have the convergence of $\sum_{\stackrel{y\in D}{m\in\supp
y}}\sum_{x:~v(x)\succ v(y)}a_{x,i}n_{k-1}$ to
$\sum_{\stackrel{y\in D}{m\in\supp y}}s_yn_{k-1}$ by Lemma
\ref{L:StrInL1}.
\end{proof}

To complete the proof of item \eqref{I:B}, it suffices:

\begin{enumerate}[{\bf (1)}]

\item To recall that (see Lemma \ref{L:Sum1})

\[\sum_{x\in I}p_x+\sum_{y\in D}\left(\sum_{\{x:\, v(x)\succ v(y)\}}
p_x\right)+\sum_{y\in D}\left(s_y-\sum_{\{x:\, v(x)\succ v(y)\}}
p_x\right)=1.\]

\item To derive from the previous item and Lemmas \ref{L:ConvWp_x}
and \ref{L:SeriesD} that the vector in \eqref{E:B_y} is an
infinite convergent convex combination of $x\in W$ and $y\in D$.

\item To observe that $y\in D$ can be involved in this combination
with nonzero coefficient only if $\left(s_y-\sum_{\{x:\, v(x)\succ
v(y)\}} p_x\right)>0$, and this can happen only if
$y\in\bigcup_{0\le \gamma\le \beta+1} X^\gamma$.

\end{enumerate}

\subsection{Proof of item \eqref{I:C}}

If $\beta<\alpha$, some vertices of $(F_\alpha)^\beta$ are not in
$(F_\alpha)^{\beta+1}$ (see Lemma \ref{L:Der}) and therefore there
is an infinite family of terminal vertices of $(F_\alpha)^\beta$
with the common down-neighbor $u$. Then there is $y\in
X^{\beta+1}\backslash\left(\bigcup_{0\le \gamma\le \beta}
X^\gamma\right)$ satisfying $v(y)=u$ (see Section \ref{S:Sets}).

To complete the proof of \eqref{I:C} it suffices to show that
$y\notin\overline{\conv\left(\bigcup_{0\le \gamma\le \beta}
X^\gamma\right)}$.

Note that $\bigcup_{0\le \gamma\le \beta} X^\gamma$ contains
vectors of two types:

\begin{enumerate}[{\bf (1)}]

\item Extensions of $y$, that is, vectors coinciding with $y$ on
its support, but also having at least one more positive
coordinate. According to the definitions in Section \ref{S:Sets},
the coordinate has to be $\ge m$, where $m$ is the image of $u$ in
$\mathbb{N}$, see the beginning of Section \ref{S:Sets}. We denote
this set of all such vectors in $\bigcup_{0\le \gamma\le \beta}
X^\gamma$ by $E$.

\item Vectors whose $m^{\rm th}$ coordinate is equal to $0$ and
some coordinates which are not in the support of $y$ are positive.
We denote the set of all such vectors in $\bigcup_{0\le \gamma\le
\beta} X^\gamma$ by $R$. By Corollary \ref{C:NoSubvec}, all
vectors in $R$ have nonzero coordinates which are not in the
support of $y$.

\end{enumerate}

Clearly,
\[\overline{\conv\left(\bigcup_{0\le \gamma\le \beta}
X^\gamma\right)}=\overline{\conv (E\cup R)}.\]

So we need to find a continuous linear functional on $Z^*$ which
separates $y$ from $\conv (E\cup R)$. In this connection, we
consider the following two continuous linear functionals on $Z^*$.

The first is the sum of all coordinates of a vector with respects
to the basis $\{z_i^*\}_{i=1}^\infty$, except the coordinates
which are nonzero for $y$. This functional is continuous because
it is a linear combination of $z^{**}$ (introduced in Section
\ref{S:RedSeparable}) and finitely many functionals of the
sequence $\{z_i\}_{i=1}^\infty$ considered as elements of
$Z^{**}$. We denote this functional by $\tilde z$.

The second functional is $z_m$ (that is, $m^{\rm th}$ coordinate
functional).

We claim that $z_m-\tilde z$ separates $y$ from $\conv (E\cup R)$.
To see this observe that $(z_m-\tilde z)(y)=a>0$, where $a$ is the
value of the largest coordinate of $y$.

On the other hand, $(z_m-\tilde z)|_R\le 0$ because $z_m$ is zero
on $R$ and $\tilde z$ is nonnegative for all vectors in $R$.

Also $(z_m-\tilde z)|_E\le 0$ because for each vector in the
extension further coordinates cannot be smaller than the previous
ones.

\subsection{Proof of item \eqref{I:D}}

To proof this statement, we repeat the argument used to prove
\eqref{I:B} and observe that we get into the closure of the same
set because $D$, in this case, is a subset of $W$. By the
Krein-Smulian theorem \cite[p.~429]{DS58}), this completes the
proof.

\subsection{Proof of item \eqref{I:E}}

Let $\alpha=\tau+1$. By item {\bf (A)}, we have
\[A^{(\tau)}\supset\conv\left(\bigcup_{1\le\gamma\le\tau}
X^\gamma\right).\]

Since the weak$^*$ derived set (for any set $A$) contains the
strong closure of the set, we get
\[A^{(\tau+1)}\supset\overline{\conv\left(\bigcup_{1\le\gamma\le\tau}
X^\gamma\right)}.\]

Denote by $j$ the initial vertex of the tree $F_{\tau+1}$, and let
$r\in Z^*$ be the vector whose only nonzero coordinate in
$\{z_i^*\}_{i=1}^\infty$ is for $k\in\mathbb{N}$ which corresponds
to $j$ according to the injective map constructed at the beginning
of Section \ref{S:Sets}. So $r=kz^*_k$, the definition
\eqref{E:VericesA} takes this form in the case where $j$ is the
initial vertex of $F_{\tau+1}$. By Lemma \ref{L:Der} and item
\eqref{I:A} we have $r\in A^{(\tau+1)}$. Our goal is to prove

\[A^{(\tau+1)}\supset\overline{\conv\left(\left(\bigcup_{1\le\gamma\le\tau}
X^\gamma\right)\bigcup\{r\}\right)}.\]

So we consider a strongly convergent sequence
\[\{\sum_{x\in W}a_{x,i}x+a_{r,i}r\}_{i=1}^\infty,\]
where $W=\bigcup_{1\le\gamma\le\tau} X^\gamma$, $a_{x,i}\ge 0$,
$a_{r,i}\ge 0$, and $\sum_{x\in W}a_{x,i}+a_{r,i}=1$. Since $0\le
a_{r,i}\le 1$, we may assume that the sequence
$\{a_{r,i}r\}_{i=1}^\infty$ is convergent. Since $A^{(\tau+1)}$ is
convex, the conclusion follows.

\subsection{End of the proof of Theorem
\ref{T:Successor}}\label{S:End}

It is clear that combination of items \eqref{I:A}-\eqref{I:E}
proves Theorem \ref{T:Successor} in all cases except case
$\alpha=1$. Our proof can be adjusted to cover this case - just
consider a forest consisting of infinitely many disjoint copies of
$F_1$. We do not provide the details because the case $\alpha=1$
is covered by Silber \cite{Sil21}.
\end{proof}

\subsection{Comment}

The following question asked in \cite{Sil21} remains unanswered.

\begin{question}[{\cite[Section 3, Question 1]{Sil21}}]\label{Q:Limit} Does there exist a convex subset $A$ in the
dual to a separable Banach space for which the least ordinal
$\alpha$ satisfying $A^{(\alpha)}={\overline{A}\,}^*$ is a limit
ordinal?
\end{question}

It should be mentioned that it is an easy consequence of the Baire
theorem that this cannot happen if we additionally require that
$A$ is subspace. To the best of our knowledge, Godun \cite{God77}
was the first to make this observation (later it was repeatedly
rediscovered, see \cite{KL87}, \cite{HS96}).

\section*{Acknowledgement}

The author gratefully acknowledges the support by the National
Science Foundation grant NSF DMS-1953773.

\textsc{Department of Mathematics and Computer Science, St. John's
University, 8000 Utopia Parkway, Queens, NY 11439, USA} \par
  \textit{E-mail address}: \texttt{ostrovsm@stjohns.edu} \par

\end{large}
\end{document}